\newtheorem{theorem}{Theorem}[section]
\newtheorem{lemma}[theorem]{Lemma}
\newtheorem{definition}{Definition}
\newtheorem{conjecture}[theorem]{Conjecture}
\DeclareMathOperator*{\diam}{diam}
\DeclareMathOperator*{\supp}{supp}
\DeclareMathOperator*{\one}{one}
\title{A fractal perspective on optimal antichains and intersecting subsets of the unit $n$-cube}
\author{Konrad Engel \thanks{Universit\"at Rostock,  Institut f\"ur Mathematik, 18051 Rostock, Germany. E-mail: konrad.engel@uni-rostock.de} 
\and
Themis Mitsis\thanks{Department of Mathematics and Applied Mathematics, University of Crete, 70013 Heraklion, Greece. 
E-mail: themis.mitsis@gmail.com}
\and
Christos Pelekis\thanks{The Czech Academy of Sciences, Institute of Computer Science, Pod Vod\'{a}renskou v\v{e}\v{z}\'{\i} 2, 182 07 Prague, Czech Republic. 
		Research supported by the Czech Science Foundation, grant number GJ16-07822Y, with institutional support RVO:67985807. E-mail: pelekis.chr@gmail.com} }
\begin{document}
	\maketitle
	
	\begin{abstract}
An \emph{$n$-cube antichain} is a subset of the unit $n$-cube $[0,1]^n$ 
that does not contain two elements 
$\mathbf{x}=(x_1, x_2,\ldots, x_n)$ and $\mathbf{y}=(y_1, y_2,\ldots, y_n)$ satisfying 
$x_i\le y_i$ for all  $i\in \{1,\ldots,n\}$. 
Using a chain partition of an adequate finite poset we show that the Hausdorff dimension of an $n$-cube antichain is at most $n-1$.
We conjecture that the $(n-1)$-dimensional Hausdorff measure of an $n$-cube antichain is at most $n$ times the Hausdorff measure of a facet of the unit $n$-cube and we
verify this conjecture for $n=2$ as well as under the assumption that the $n$-cube antichain is a smooth surface.  
Our proofs employ estimates on the Hausdorff measure of an $n$-cube antichain in terms of the sum of the  Hausdorff measures of its injective projections.  Moreover, 
by proceeding along devil's staircase, we construct a $2$-cube antichain whose $1$-dimensional 
Hausdorff measure equals $2$.  Additionally, we discuss a problem with an intersection condition in a similar setting. 
	\end{abstract}

\noindent {\emph{Keywords}: antichain; intersection condition; Sperner's theorem; Erd\H{o}s-Ko-Rado theorem; Hausdorff dimension; devil's staircase

\section{Prologue, related work and main results}\label{s1}

Let $[n]$ be the set of integers $\{1,\ldots,n\}$ and $[n]_0=[n] \cup \{0\}$. The cardinality of a finite set $F$ is denoted by $|F|$, as usual.
A collection $\mathcal{F}$ of subsets of $[n]$ having the property that no element in $\mathcal{F}$ is contained in another is referred to as an \emph{antichain}  (or \emph{Sperner family}) of $[n]$.  
A collection of subsets of $[n]$ whose cardinality equals $k$ is called a $k$-\emph{uniform family}. 
For a positive integer $t$, a collection of subsets $\mathcal{F}$ of $[n]$ is called $t$-\emph{intersecting} if 
$|A\cap B|\ge t$ for all $A,B\in \mathcal{F}$. 
 
 Let us begin with two well-known results from extremal set theory for antichains and $k$-uniform $t$-intersecting families.  
The problem of determining the maximum cardinality of an antichain is considered as the starting point of extremal set theory, which has been a fast growing area of combinatorics for several decades. 
The following result of Sperner provides a sharp upper bound on the number of elements in an antichain. \\

\begin{theorem}[Sperner \cite{sperner}] Fix a positive integer $n$ and let $\mathcal{F}$ be an antichain of $[n]$. 
Then $|\mathcal{F}|\leq\binom{n}{\lfloor n/2 \rfloor}$.
\label{th:0}
\end{theorem}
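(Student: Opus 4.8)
The plan is to prove the sharper \emph{LYM inequality} $\sum_{A \in \mathcal{F}} \binom{n}{|A|}^{-1} \le 1$ by a double-counting argument over the maximal chains of the Boolean lattice, and then to deduce the stated bound. First I would consider the lattice $2^{[n]}$ of all subsets of $[n]$ ordered by inclusion, and call a sequence $\emptyset = A_0 \subsetneq A_1 \subsetneq \cdots \subsetneq A_n = [n]$ a \emph{maximal chain}; necessarily $|A_i| = i$, so a maximal chain amounts to adding the elements of $[n]$ one at a time, whence there are exactly $n!$ of them. The key counting fact is that a fixed set $A$ with $|A| = k$ lies on precisely $k!\,(n-k)!$ maximal chains, since such a chain is determined by an ordering of the $k$ elements of $A$ followed by an ordering of the $n-k$ elements of its complement.

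The central observation is that a maximal chain is itself a chain in the poset, and therefore meets the antichain $\mathcal{F}$ in at most one element. Counting the incident pairs (maximal chain $C$, set $A \in \mathcal{F}$ with $A \in C$) in two ways then gives
\begin{equation*}
\sum_{A \in \mathcal{F}} |A|!\,(n-|A|)! \;\le\; n!,
\end{equation*}
because the left side groups the incidences by $A$, while the right side bounds the total since each of the $n!$ chains contributes at most one incidence. Dividing through by $n!$ yields the LYM inequality $\sum_{A \in \mathcal{F}} \binom{n}{|A|}^{-1} \le 1$.

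To conclude, I would invoke the unimodality of the binomial coefficients, namely $\binom{n}{k} \le \binom{n}{\lfloor n/2 \rfloor}$ for every $k \in \{0,1,\ldots,n\}$, which gives $\binom{n}{|A|}^{-1} \ge \binom{n}{\lfloor n/2\rfloor}^{-1}$ for each $A \in \mathcal{F}$. Summing and combining with the LYM inequality,
\begin{equation*}
\frac{|\mathcal{F}|}{\binom{n}{\lfloor n/2\rfloor}} \;=\; \sum_{A \in \mathcal{F}} \binom{n}{\lfloor n/2\rfloor}^{-1} \;\le\; \sum_{A \in \mathcal{F}} \binom{n}{|A|}^{-1} \;\le\; 1,
\end{equation*}
which rearranges to the desired bound $|\mathcal{F}| \le \binom{n}{\lfloor n/2\rfloor}$.

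The argument is essentially routine; the only steps demanding care are the two combinatorial counts (the number of maximal chains through a given set, and the double-counting of incidences), together with the unimodality inequality. An alternative, more in the spirit of the chain-partition methods used elsewhere in this paper, would be to decompose $2^{[n]}$ into exactly $\binom{n}{\lfloor n/2\rfloor}$ pairwise disjoint \emph{symmetric} chains and to observe that $\mathcal{F}$ meets each such chain at most once; there the main obstacle shifts to establishing the existence of a symmetric chain decomposition, which is itself a nontrivial construction.
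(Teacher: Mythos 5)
Your proof is correct and complete. Note, however, that the paper does not prove this statement at all: Theorem \ref{th:0} is quoted as a classical result with a citation to Sperner's 1928 paper, so there is no in-paper argument to compare against. On its own merits, your LYM double-counting argument is sound: the count of $n!$ maximal chains, the count of $k!\,(n-k)!$ maximal chains through a fixed $k$-set, the observation that an antichain meets each maximal chain at most once, and the final appeal to unimodality of the binomial coefficients are all correctly executed, and together they yield $|\mathcal{F}|\le\binom{n}{\lfloor n/2\rfloor}$. Your closing remark about symmetric chain decompositions is apt in the context of this paper: the authors' own Lemma \ref{finitesperner}, which drives the proof of Theorem \ref{th:11}, is precisely a chain-partition argument (partitioning $[m-1]_0^n$ into explicitly constructed chains and counting the chains), so the decomposition route is the one closest in spirit to the techniques actually used here, while the LYM route you chose has the advantage of giving the stronger normalized inequality $\sum_{A\in\mathcal{F}}\binom{n}{|A|}^{-1}\le 1$ with no construction needed.
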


In other words, the maximum "size'' of an antichain is at most $\binom{n}{\lfloor n/2 \rfloor}$. 
Notice that the bound is sharp and that it is attained by the antichain consisting of all subsets of $[n]$ whose 
cardinality equals $\lfloor n/2 \rfloor$. 
Sperner's theorem is a fundamental result in extremal set theory that has been generalised 
in many ways (see \cite{Anderson, Engeltwo} for textbooks devoted to the topic). 
Another fundamental result from extremal set theory determines the maximum 
"size'' of a $k$-uniform $t$-intersecting family.  \\

\begin{theorem}[Erd\H{o}s-Ko-Rado \cite{erdkorado}]
Let $1\leq t\leq k$. Then there exists an integer $n_0(k,t)$ such that for all $n > n_0(k,t)$
the cardinality of a $k$-uniform $t$-intersecting family of $[n]$ is at most $\binom{n-t}{k-t}$.  
\label{ekr}
\end{theorem}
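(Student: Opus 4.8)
The plan is to prove the bound by the \emph{shifting} (compression) method, which replaces an arbitrary $k$-uniform $t$-intersecting family by a highly structured one of the same size that can be analysed directly. The benchmark to keep in mind is the \emph{$t$-star} $\mathcal{F}_0 = \{A \subseteq [n] : [t] \subseteq A,\ |A| = k\}$: any two of its members share at least the set $[t]$, so it is $t$-intersecting, and it has exactly $\binom{n-t}{k-t}$ elements. The theorem asserts that, once $n$ is large relative to $k$ and $t$, no $t$-intersecting $k$-uniform family can beat this construction.

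For indices $1 \le i < j \le n$ I would introduce the shift operator $S_{ij}$, which acts on a set $A$ with $j \in A$ and $i \notin A$ by replacing it with $(A \setminus \{j\}) \cup \{i\}$ whenever the latter is not already present, and fixes $A$ otherwise; then $S_{ij}(\mathcal{F}) = \{S_{ij}(A) : A \in \mathcal{F}\}$. Three facts must be checked: $S_{ij}$ preserves $|\mathcal{F}|$ and $k$-uniformity (both immediate), and---the only point requiring a short case analysis---it preserves the $t$-intersecting property. Since each genuine shift strictly decreases the nonnegative integer $\sum_{A \in \mathcal{F}} \sum_{a \in A} a$, iterating the operators terminates at a \emph{compressed} family $\mathcal{G}$ that is fixed by every $S_{ij}$, with $|\mathcal{G}| = |\mathcal{F}|$ and the same uniformity and intersection parameters.

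It remains to bound a compressed family. Writing $\mathcal{G} = \mathcal{G}_{[t]} \cup \mathcal{G}'$, where $\mathcal{G}_{[t]} = \{A \in \mathcal{G} : [t] \subseteq A\}$ and $\mathcal{G}'$ collects the members omitting some element of $[t]$, one has $|\mathcal{G}_{[t]}| \le \binom{n-t}{k-t}$ for free, since $\mathcal{G}_{[t]} \subseteq \mathcal{F}_0$. The crux is therefore to control $\mathcal{G}'$: I would show that, because $\mathcal{G}$ is compressed, the members of $\mathcal{G}'$ are strongly constrained---their elements are pushed toward the small end of $[n]$---and that the $t$-intersecting condition then forces $|\mathcal{G}| \le \binom{n-t}{k-t}$ nonetheless, for instance by injecting $\mathcal{G}'$ into the star members left unused by $\mathcal{G}_{[t]}$, or by deriving a contradiction with the intersection condition from a pair of members that have been pushed too far apart. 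This last step is the main obstacle, and it is precisely here that the threshold $n_0(k,t)$ enters: one needs $n$ large enough to guarantee the ``room'' that makes the non-star part negligible. (For the special case $t = 1$ the compression step can be bypassed entirely by Katona's cyclic-permutation averaging: a uniformly random arc of length $k$ on a random cyclic ordering of $[n]$ lies in an intersecting family with probability at most $k/n$, whence $|\mathcal{F}| \le \tfrac{k}{n}\binom{n}{k} = \binom{n-1}{k-1}$.)
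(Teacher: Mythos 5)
This is Theorem~\ref{ekr} of the paper, which is quoted as a classical result with a citation to Erd\H{o}s, Ko and Rado; the paper supplies no proof of its own, so there is no internal argument to compare yours against. Judged on its own terms, your proposal follows the standard shifting route, and the preparatory steps are sound: the operators $S_{ij}$ preserve cardinality, $k$-uniformity and (after the usual case analysis) the $t$-intersecting property, and the potential function $\sum_{A\in\mathcal{F}}\sum_{a\in A}a$ guarantees termination at a compressed family $\mathcal{G}$ of the same size.

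However, the decisive step is missing. After splitting $\mathcal{G}$ into the star part $\mathcal{G}_{[t]}$ and the remainder $\mathcal{G}'$, you only announce an intention (``I would show that \dots the $t$-intersecting condition then forces $|\mathcal{G}|\le\binom{n-t}{k-t}$ nonetheless''), offering two candidate mechanisms without carrying out either. That is precisely where the content of the theorem lives and where the threshold $n_0(k,t)$ must actually be produced. A concrete way to close the gap is Frankl's lemma: if $\mathcal{G}$ is shifted, $k$-uniform and $t$-intersecting, then there exists $i$ with $t\le i\le k$ such that every $A\in\mathcal{G}$ satisfies $|A\cap[2i-t]|\ge i$. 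For $i=t$ this forces $[t]\subseteq A$ for all members, so $\mathcal{G}$ lies inside the star and the bound is immediate; for $i>t$ it gives $|\mathcal{G}|\le\binom{2i-t}{i}\binom{n-2i+t}{k-i}=O\left(n^{k-t-1}\right)$, which is eventually smaller than $\binom{n-t}{k-t}=\Theta\left(n^{k-t}\right)$, and this comparison is exactly what determines $n_0(k,t)$. Without some such lemma your argument is a plan rather than a proof. (The parenthetical Katona cyclic-permutation argument for $t=1$ is correct, but it covers only that special case.)
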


Notice that the bound is sharp and that it is attained by the family consisting of all subsets of 
$[n]$ whose cardinality equals $k$ that contain, say, the set $[t]$. This is yet another result 
in extremal set theory that has been generalised in several ways (see \cite{Anderson, Engeltwo}). 
In a celebrated paper, Ahlswede and Khachatrian \cite{AK} determined the maximum size of $k$-uniform $t$-intersecting 
families for all parameters $n,k,t$. 
In this article, we discuss "continuous versions'' of the aforementioned results. 

The idea that several combinatorial statements have continuous counterparts is rather old and several results have been reported in a "measurable'' setting (see \cite{Bollobas, Bollobas_Varopoulos,  engel, katonaone, katonatwo}) as well as in a "vector space'' setting (see \cite{FranklWilson, klainrota}). 
In this work, we consider 
continuous versions of results from extremal set theory which address the problem of determining the 
maximum Hausdorff dimension as well as the maximum Hausdorff measure of subsets on the unit $n$-cube under certain constraints that are similar to those imposed by the above mentioned theorems. 

Before being more precise, let us proceed with some observations. 
Notice that with every set $A\subseteq [n]$ one can uniquely associate a binary vector of length $n$, say $\mathbf{x}_A =(x_1,\ldots,x_n)$, where $x_i=1$ if $i\in A$ and $x_i=0$ if $i\notin A$. 
We refer to $\mathbf{x}_A$ as the \emph{characteristic vector} corresponding to $A\subseteq [n]$. 
Notice also that this correspondence is bijective and one may choose not to distinguish  between subsets of $[n]$ and 
their characteristic vectors. 
Now let $\mathcal{F}$ be an antichain of $[n]$ and let $\mathcal{B} =\{\mathbf{x}_F\}_{F\in \mathcal{F}}$ be the corresponding set of 
characteristic vectors. 
Notice that  Sperner's theorem is equivalent to the statement that 
the cardinality of the class $\mathcal{B}$ is at most $\binom{n}{\lfloor n/2 \rfloor}$ and that the class of binary vectors $\left\{\mathbf{x}=(x_1,\ldots,x_n)\in \{0,1\}^n: \sum_i x_i = \lfloor n/2 \rfloor\right\}$ attains the bound.
Now the fact that $\mathcal{F}$ is an antichain imposes certain conditions on the binary vectors of the class $\mathcal{B}$. In particular, $\mathcal{B}$ is characterized by the fact that it does 
not contain two elements  
$\mathbf{x}_F=(x_1,\ldots,x_n), \mathbf{x}_{T}= (y_1,\ldots, y_n)$ such that $x_i\le y_i$ for all $i\in [n]$. 
By relaxing the assumption that the coordinates of the characteristic vectors belong to $\{0,1\}$, one 
naturally arrives at the following definition. \\

\begin{definition}[$n$-cube antichains]
\label{def:1}
Let $n$ be  a positive integer. A subset $S$ of the unit $n$-cube $[0, 1]^n$ is called an \emph{$n$-cube antichain} if $S$ does not contain two elements $\mathbf{x}=(x_1, x_2,\ldots, x_n)$ and $\mathbf{y}=(y_1, y_2,\ldots, y_n)$ satisfying 
$x_i\le y_i$ for all $i\in [n]$.
\end{definition}
 
In this article, we shall be interested in the maximum "size'' of an $n$-cube antichain. 
Before presenting our main results, let us briefly mention a related result of the first author (see \cite{engel}). 
Let $c$ be a fixed non-negative real and assume that 
$S$ is a measurable subset of $[0,1]^n$ which does not contain two elements  $\mathbf{x}=(x_1, x_2,\ldots, x_n)$ and $\mathbf{y}=(y_1, y_2,\ldots, y_n)$ such that $x_i\le y_i$ for all $i\in [n]$ and 
 $\sum (y_i-x_i) \geq c$. 
Such a set is referred to as an \emph{$n$-cube-$c$-antichain}. 
Notice that $n$-cube-$0$-antichains are exactly the $n$-cube antichains given in Definition \ref{def:1}.
The following result determines $n$-cube-$c$-antichains of maximum "size''. \\

\begin{theorem}[\cite{engel}]
\label{th:01} Let $c>0$. 
Among all $n$-cube-$c$-antichains the set 
\[ S_c :=\left\{(x_1,\ldots,x_n): \frac{n-c}{2}\leq \sum_i x_i < \frac{n+c}{2}\right\} \] 
has maximum $n$-dimensional Lebesgue measure. 
\end{theorem}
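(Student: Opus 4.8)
The plan is to recast the continuous statement as a discrete Sperner-type problem on a grid poset, solve that by a symmetric chain decomposition, and pass to the limit. First I would check that $S_c$ is itself an $n$-cube-$c$-antichain: if $\mathbf{x},\mathbf{y}\in S_c$ with $x_i\le y_i$ for all $i$, then $\sum_i(y_i-x_i)=\sum_i y_i-\sum_i x_i<\frac{n+c}{2}-\frac{n-c}{2}=c$, so the forbidden configuration never occurs. The object that organizes everything is the rank function $f(\mathbf{x})=\sum_i x_i$, whose level sets foliate the cube and whose push-forward of the $n$-dimensional Lebesgue measure $\lambda_n$ is the symmetric, unimodal Irwin--Hall density $p$ on $[0,n]$; in particular $\lambda_n(S_c)=\int_{(n-c)/2}^{(n+c)/2}p(s)\,ds$. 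Writing $a(s)$ for the normalized $(n-1)$-dimensional content of $S\cap\{f=s\}$, one has $\lambda_n(S)=\int_0^n a(s)\,ds$ with $0\le a(s)\le p(s)$, so the naive pointwise bound $a\le p$ only yields the trivial estimate $\lambda_n(S)\le 1$; the antichain condition must be used \emph{globally}.

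To do this I would discretize. Partition $[0,1]^n$ into $N^n$ half-open subcubes indexed by the grid poset $G_N=\{0,\dots,N-1\}^n$ with the coordinatewise order, and let $\mathcal{F}_N$ be the collection of cells entirely contained in $S$. If $\mathbf{a}\le\mathbf{b}$ in $G_N$ with $\sum_i(b_i-a_i)\ge cN$, then taking $\mathbf{x}$ to be the bottom corner of cell $\mathbf{a}$ and $\mathbf{y}$ a suitable near-top point of cell $\mathbf{b}$ (equal to $\mathbf{x}$ in the coordinates where $a_i=b_i$) produces $\mathbf{x}\le\mathbf{y}$ with $\sum_i(y_i-x_i)\ge c$; were both cells in $\mathcal{F}_N$, these points would be a forbidden pair in $S$. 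Hence $\mathcal{F}_N$ contains no comparable pair whose ranks $\sum_i a_i$ differ by at least $C:=\lceil cN\rceil$. This is precisely a \emph{bounded-rank-difference} family, the finite analogue of an $n$-cube-$c$-antichain; for $C=1$ it is an ordinary antichain, governed by Theorem~\ref{th:0}.

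Now I would invoke the classical symmetric chain decomposition of a product of chains (de Bruijn--Tengbergen--Kruyswijk; see \cite{Anderson,Engeltwo}), which writes $G_N=\bigsqcup_j\mathcal{C}_j$ where each $\mathcal{C}_j$ is a saturated chain occupying a block of consecutive ranks symmetric about the central rank $R=n(N-1)/2$. Since each $\mathcal{C}_j$ is totally ordered, $\mathcal{F}_N\cap\mathcal{C}_j$ can occupy at most $\min\bigl(C,\operatorname{len}(\mathcal{C}_j)\bigr)$ ranks, giving
\[
|\mathcal{F}_N|\le\sum_j\min\bigl(C,\operatorname{len}(\mathcal{C}_j)\bigr).
\]
The crucial point is a \textbf{centering identity}: because the central rank-window of width $C$ and every chain $\mathcal{C}_j$ are both symmetric about $R$, the window meets $\mathcal{C}_j$ in exactly $\min\bigl(C,\operatorname{len}(\mathcal{C}_j)\bigr)$ ranks, so summing over $j$ yields
\[
\sum_j\min\bigl(C,\operatorname{len}(\mathcal{C}_j)\bigr)=\bigl|\{\mathbf{a}\in G_N:\textstyle\sum_i a_i\ \text{lies in the central window}\}\bigr|,
\]
which is exactly the finite analogue of $S_c$. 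Thus the symmetric-chain bound for \emph{any} valid $\mathcal{F}_N$ equals the count of the central-window family, and in particular an off-center choice of window chain-by-chain cannot help — which is exactly the failure mode that dooms the cruder fibration of the cube by straight diagonal lines, whose fibers are not symmetric about the centre when $n\ge 3$.

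Dividing by $N^n$ and letting $N\to\infty$, the left-hand side tends to $\lambda_n(S)$ while the right-hand side tends to $\int_{(n-c)/2}^{(n+c)/2}p(s)\,ds=\lambda_n(S_c)$, since the normalized rank counts converge to $p$ and $C/N\to c$. I expect the main obstacle to be precisely this limiting step. One must show that the cells entirely contained in $S$ capture the full measure of $S$ as $N\to\infty$ (a Lebesgue-density/inner-regularity argument, delicate because $S$ is only assumed measurable and may be highly irregular), control the $O(1/N)$ rounding between $C$ and $cN$, and establish uniform convergence of the normalized central-window counts to the central Irwin--Hall integral. An alternative that bypasses discretization would be to construct a measure-compatible foliation of the cube by increasing curves, each symmetric about the centre $(\tfrac12,\dots,\tfrac12)$ under $\mathbf{x}\mapsto\mathbf{1}-\mathbf{x}$ — a continuous symmetric chain decomposition — and run the centering identity directly; but constructing such a foliation for $n\ge 3$ is itself the hard part, which is why the discretization route is the cleaner one.
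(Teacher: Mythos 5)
The paper itself gives no proof of Theorem~\ref{th:01}: it is imported from \cite{engel} as a known result, so there is no in-paper argument to compare yours against. Judged on its own terms, the discrete core of your proposal is correct: the symmetric chain decomposition of $\{0,\dots,N-1\}^n$ plus your centering identity does show that, among families with no comparable pair of rank difference at least $C$, the union of the $C$ middle rank levels is extremal, and that count normalized by $N^n$ converges to the Lebesgue measure of $S_c$.

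The genuine gap is the transfer between $S$ and $\mathcal{F}_N$, and it is not the mere technicality you describe. Writing $\lambda_n$ for $n$-dimensional Lebesgue measure, the statement you say ``one must show'' --- that the cells entirely contained in $S$ capture the full measure of $S$ as $N\to\infty$ --- is false for a general measurable $c$-antichain: take $S=S_c\setminus D$ with $D$ a countable dense subset of $S_c$. Then $S$ is still a $c$-antichain with $\lambda_n(S)=\lambda_n(S_c)>0$, yet no grid cell at any scale is contained in $S$, so $\mathcal{F}_N=\emptyset$ for every $N$ and your chain of inequalities bounds $0$ rather than $\lambda_n(S)$; no Lebesgue-density or inner-regularity argument can rescue that exact claim. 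The natural repair --- counting cells $C_{\mathbf d}$ with $\lambda_n(C_{\mathbf d}\cap S)>0$, whose normalized number does dominate $\lambda_n(S)$ --- breaks your forbidden-configuration step: when $\mathbf a\le\mathbf b$ with $a_i=b_i$ in some coordinate, points of $S$ in the two cells need not be comparable in that coordinate, so one can only forbid pairs with $\mathbf a+\mathbf 1\le\mathbf b$ (every coordinate strictly larger) and rank difference at least $cN+n$. That weaker condition is not handled by your saturated-chain argument, since rank distance $k$ along a saturated chain does not force all coordinates to increase; and it genuinely admits larger families than the central window (for the degenerate width $C=n$ the family $\{\mathbf d:\min_i d_i=0\}$, of size about $nN^{n-1}$, beats the $n$ middle levels for $n\ge3$). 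So the measure-theoretic passage to the limit requires a substantively different idea, and as written the proof cannot be completed.
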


Similarly, by relaxing the assumption that the coordinates of the characteristic vectors of a $k$-uniform $t$-intersecting family belong to $\{0,1\}$, one arrives at the following definition. \\

\begin{definition}[$(n,k,t)$-sets] Fix positive integers $n,k,t$ such that $n\geq k>t$. A 
subset $A$ of the unit $n$-cube $[0,1]^n$ is called an \emph{$(n,k,t)$-set} if every $(x_1,\ldots,x_n)\in A$ has \emph{exactly} $k$ coordinates that are strictly positive and for every two elements $(x_1,\ldots,x_n), (y_1,\ldots,y_n)\in A$, there exists $t$ distinct indices $i_1,\dots,i_t \in [n]$ 
such that $x_{i_j}=y_{i_j}>0$ for $j \in [t]$.
\end{definition}

In this article, we also address the problem of determining the maximum "size" of $(n,k,t)$-sets. 

Notice that  Theorem \ref{th:01} does not provide useful information when $c=0$.
In fact, it is easy to see using Lebesgue's density theorem that the $n$-dimensional Lebesgue 
measure of a measurable $n$-cube antichain equals zero. This suggests that the $n$-dimensional 
Lebesgue measure is not an appropriate notion of "size'' for an $n$-cube antichain and therefore it is natural 
to look at its Hausdorff dimension.  Let us briefly recall some definitions from the 
theory of fractals. If $S$ is a non-empty subset of $\mathbb{R}^n$, we denote by 
$\diam(S)$ its diameter. Fix a positive real number $s$ and, for $\delta>0$, let  
\[ \mathcal{H}_{\delta}^{s}(S) = \inf \left\{ \sum_{i}\textrm{diam}(U_i)^s: S \subseteq \bigcup_i U_i \; \textrm{and} \;  \textrm{diam}(U_i)\le \delta  \right\} .  
\]
The limit $\lim_{\delta\rightarrow 0}\mathcal{H}_{\delta}^{s}(S)$, denoted $\mathcal{H}^{s}(S)$, is the $s$-\emph{dimensional Hausdorff measure} of $S$. The \emph{Hausdorff dimension}, denoted $\dim_H S$, is defined as 
\[ \dim_H S = \inf \left\{ s : \mathcal{H}^s(S) = 0   \right\} .  \]
Finally, the \emph{upper box-counting dimension} of $S$ is defined as 
\[ 
\overline{\dim}_B S = \limsup_{\delta\rightarrow 0} \frac{\log N_{\delta}(S)}{-\log \delta} ,
\]
where $N_{\delta}(S)$ is the smallest number of sets of diameter $\delta$ which can cover $S$. 
We refer the reader to Falconer \cite{Falconer_1990} for further details. In the sequel, we will apply the well known fact (see \cite[p. 48]{Falconer_1990}) that $\dim_H S \leq \overline{\dim}_B S$. 
Our main result concerning the Hausdorff dimension of $n$-cube antichains reads as follows. \\

\begin{theorem}
\label{th:11} Fix a positive integer $n\ge 2$ and let $S\subseteq [0,1]^n$ be an $n$-cube antichain. Then the Hausdorff dimension of $S$ is at most $n-1$. 
\end{theorem}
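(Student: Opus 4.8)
The plan is to bound the upper box-counting dimension $\overline{\dim}_B S$ and then invoke the inequality $\dim_H S \le \overline{\dim}_B S$ recalled in the excerpt. So it suffices to exhibit, for each large integer $m$, an economical cover of $S$ by sets of small diameter and to show that the number of sets needed grows no faster than $m^{n-1}$.

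First I would tile $[0,1]^n$ by the $m^n$ closed grid cubes $C_{\mathbf a} = \prod_{i=1}^n [a_i/m, (a_i+1)/m]$, indexed by $\mathbf a = (a_1,\dots,a_n) \in \{0,\dots,m-1\}^n$; each has diameter $\delta_m = \sqrt{n}/m$. Since these cubes cover $[0,1]^n$, the quantity $N_{\delta_m}(S)$ is at most the number of cubes $C_{\mathbf a}$ that meet $S$, and the whole problem reduces to bounding this number.

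The combinatorial heart of the argument is a chain partition of the index poset $\{0,\dots,m-1\}^n$, taken with the product order, into the diagonal chains $\{\mathbf a + j\mathbf 1 : j \ge 0\}$ in the direction $\mathbf 1 = (1,\dots,1)$, where $\mathbf a$ ranges over the base indices with $\min_i a_i = 0$. The key geometric observation is that if two cubes on the same chain have indices differing by at least $2$, say $C_{\mathbf b}$ and $C_{\mathbf b + k\mathbf 1}$ with $k \ge 2$, then the coordinate ranges $[b_i/m,(b_i+1)/m]$ and $[(b_i+k)/m,(b_i+k+1)/m]$ are disjoint with the former lying strictly to the left, so every point of $C_{\mathbf b}$ is strictly below every point of $C_{\mathbf b+k\mathbf 1}$ in each coordinate. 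Hence if $S$ met both cubes it would contain two distinct comparable points, contradicting the antichain property. Consequently the indices on a single chain that meet $S$ have all pairwise differences at most $1$, so $S$ meets at most two cubes per chain, and the number of intersected cubes is at most twice the number of chains. (Adjacent cubes on a chain share only a corner, which is exactly why the clean bound is \emph{two} rather than one.)

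It remains to count the chains, which equals the number of base indices, namely $m^n - (m-1)^n$; expanding gives $m^n-(m-1)^n = n\,m^{n-1} + O(m^{n-2}) = \Theta(m^{n-1})$. Therefore $N_{\delta_m}(S) \le 2\bigl(m^n-(m-1)^n\bigr) \le C_n\, m^{n-1}$ for a constant $C_n$ depending only on $n$. Substituting $\delta_m = \sqrt n/m$ into the definition of $\overline{\dim}_B S$ and passing to the limit as $m \to \infty$ (intermediate values of $\delta$ causing no difficulty since $\delta_{m+1}/\delta_m \to 1$) gives $\overline{\dim}_B S \le n-1$, and hence $\dim_H S \le n-1$. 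I expect the main obstacle to be the geometric separation lemma, namely pinning down exactly when two grid cubes are forced to be strictly comparable; once that is in place, the chain count $m^n-(m-1)^n$ and the final limit computation are routine.
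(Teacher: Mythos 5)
Your proposal is correct and follows essentially the same route as the paper: both bound the upper box-counting dimension by tiling $[0,1]^n$ with $m^n$ grid cubes, partitioning the index set $\{0,\dots,m-1\}^n$ into the $m^n-(m-1)^n\le n\,m^{n-1}$ diagonal chains in the direction $(1,\dots,1)$, and observing that the antichain condition lets $S$ meet only $O(1)$ cubes per chain. The only (immaterial) difference is that the paper uses half-open cubes so that each chain contributes at most one cube, whereas your closed cubes force the slightly weaker ``at most two per chain,'' which you handle correctly.
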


Notice that the set $\left\{(x_1,\ldots,x_n)\in [0,1]^n: \sum_i x_i = n/2\right\}$ is an $n$-cube antichain whose Hausdorff dimension equals $n-1$ and therefore the bound in Theorem \ref{th:11} is sharp.     
Given this result, it is natural to ask 
for sharp upper bounds on the $(n-1)$-dimensional Hausdorff measure of an $n$-cube antichain. 
We are unable to settle this problem in general. We conjecture that 
the $(n-1)$-dimensional Hausdorff measure of an $n$-cube antichain is at most $n$ times the Hausdorff measure of the unit $(n-1)$-cube, i.e., $n \sigma_{n-1}$, where
\[
\sigma_n=\frac{2^{n} \Gamma(n/2+1)}{\pi^{n/2}}.
\]
In Section \ref{s2}, we verify the 
validity of this conjecture when $n=2$ as well as when $S$ is a smooth hypersurface. In particular, 
we obtain the following results.   \\

\begin{theorem}
\label{th:2} 
\renewcommand{\labelenumi}{\alph{enumi})}~
\begin{enumerate}
\item Let $S$ be a $2$-cube antichain in $[0,1]^2$. Then 
$\mathcal{H}^{1}(S) \leq 2$. 
\item There exists a $2$-cube antichain whose 
Hausdorff measure equals $2$. 
\end{enumerate}
\end{theorem}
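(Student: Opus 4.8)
The plan is to treat the two parts separately, reducing both to the geometry of graphs of monotone functions, and the first thing I would record is a rigid structural consequence of the antichain condition in the plane. If $(x_1,x_2)$ and $(y_1,y_2)$ are distinct points of a $2$-cube antichain $S$, they are incomparable in the product order, so we can have neither $x_1\le y_1,\,x_2\le y_2$ nor $x_1\ge y_1,\,x_2\ge y_2$; in particular no two points of $S$ share a first coordinate, and $x_1<y_1$ forces $x_2>y_2$. Hence $S$ is exactly the graph $\{(x,g(x)):x\in D\}$ of a strictly decreasing function $g\colon D\to[0,1]$ on some set $D=\pi_1(S)\subseteq[0,1]$, with range $\pi_2(S)\subseteq[0,1]$. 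This is where the two injective coordinate projections of the introduction enter.

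For part (a) I would bound $\mathcal{H}^1(S)$ by a staircase covering built from these projections. Fix $\delta>0$, partition the $x$-axis into intervals of width at most $\delta/2$, and note that over each such interval the portion of $S$ lies in a box whose $x$-side is that interval and whose $y$-side is the closed range of $g$ there. Monotonicity forces these $y$-ranges to be nested in decreasing order from left to right, so they are essentially disjoint subintervals of $[0,1]$ and their heights sum to at most $1$, while the widths sum to at most $1$. Where a box is still too tall, due to a large jump of $g$, I would subdivide it along the $y$-axis; since $g$ is strictly monotone, slicing by $y$-values slices simultaneously by $x$-values, so the refined boxes still tile the graph without increasing the total width or height. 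As $\diam\le(\text{width})+(\text{height})$ for every box, the covering has mesh at most $\delta$ and satisfies $\sum_i\diam(U_i)\le(\text{total width})+(\text{total height})\le 2$; letting $\delta\to0$ gives $\mathcal{H}^1(S)\le2$. This is the promised inequality $\mathcal{H}^1(S)\le\mathcal{H}^1(\pi_1(S))+\mathcal{H}^1(\pi_2(S))$.

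For part (b) the bound of (a) is sharp and must be realised \emph{exactly} by a strictly decreasing graph. The naive extremisers---monotone staircases with genuine horizontal and vertical segments---do have graph length $2$, but any such segment contains two comparable points and is therefore forbidden. The remedy is to keep monotonicity strict while making the function \emph{singular}: I would set $g(x)=1-h(x)$, where $h\colon[0,1]\to[0,1]$ is a strictly increasing continuous function with $h'=0$ almost everywhere (a strictly increasing devil's staircase, built for instance by superposing suitably scaled and placed copies of the Cantor function so that the graph rises on every subinterval). Then $g$ is strictly decreasing and continuous with $g(0)=1$, $g(1)=0$ and $g'=0$ a.e., so by the structural observation its graph $S$ is a $2$-cube antichain.

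It remains to prove $\mathcal{H}^1(S)=2$, and this lower bound is the step I expect to be the crux. Since the graph is a simple continuous arc its Hausdorff measure equals its arc length, and (a) already gives the upper bound, so only $\mathcal{H}^1(S)\ge2$ is needed. Given $\varepsilon>0$, I would exploit $g'=0$ a.e. through a Vitali-type argument to extract finitely many disjoint intervals on which $g$ is almost flat, of total width at least $1-\varepsilon$ and total drop at most $\varepsilon$; by monotonicity the complementary intervals then have total width at most $\varepsilon$ but total drop at least $1-\varepsilon$. Using the partition formed by all these endpoints and estimating $\sqrt{\Delta x^2+\Delta y^2}\ge|\Delta x|$ on the flat pieces and $\ge|\Delta y|$ on the complementary pieces, the arc length is at least $(1-\varepsilon)+(1-\varepsilon)$; letting $\varepsilon\to0$ yields $\mathcal{H}^1(S)\ge2$. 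The main obstacle throughout is conceptual rather than computational: recognising that attaining the extremal value forces the extremiser to be a strictly monotone function with vanishing derivative almost everywhere, and then splitting the length lower bound so that its almost-flat and steeply-descending parts each contribute a full unit.
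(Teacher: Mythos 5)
Your proposal is correct. For part (a) your staircase covering is essentially the paper's first proof: the paper covers $S$ by products $Q_{ij}\times R_{ij}$ built from ordered covers of the two coordinate projections and uses the antichain condition to show that the widths sum to at most $\sum_i\diam(Q_i)$ and the heights to at most $\sum_j\diam(R_j)$; your partition of the $x$-axis into short intervals, with tall boxes subdivided along the $y$-axis, is the same argument specialised to interval covers, which suffices because $\mathcal{H}^1$ on the line coincides with Lebesgue outer measure. (The paper also records a second proof via the diagonal projections $(x,y)\mapsto(x-y,0)$ and $(x,y)\mapsto(y-x,0)$, whose inverses are $1$-Lipschitz on the two halves of the square.) For part (b) you arrive at the same extremal object---the graph of a strictly monotone, continuous, singular function---but your lower bound is genuinely different. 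The paper splits the graph into $A=\{(x,f(x)):f'(x)=0\}$ and $B=S\setminus A$, gets $\mathcal{H}^1(A)\ge 1$ because the $1$-Lipschitz projection of $A$ onto the $x$-axis has full measure, and invokes the inequality $\lambda(f(E))\le\int_E|f'|$ (Bogachev) to show $f(\{f'=0\})$ is null, so the projection of $B$ onto the $y$-axis has full measure and $\mathcal{H}^1(B)\ge 1$. You instead identify $\mathcal{H}^1$ of the simple arc with its arc length and run a Vitali-covering argument to exhibit a partition whose inscribed polygon has length at least $2-2\varepsilon$. Both routes are valid: the paper's avoids the (standard but not free) identification of $\mathcal{H}^1$ with arc length and needs no covering lemma, while yours is self-contained elementary real analysis once that identification is granted. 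Two small points to make explicit in a final write-up: justify the existence of a strictly increasing continuous singular function (the paper cites Zaanen--Luxemburg; your superposition construction needs Fubini's theorem on differentiation of series of monotone functions to see that the sum is still singular), and state the fact that for a continuous injective arc the $1$-dimensional Hausdorff measure equals the arc length, since your lower bound rests on it.
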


Let $\pi_j: [0,1]^n\rightarrow [0,1]^n$ be the projections defined by 
\[
\pi_j(x_1,\dots,x_n)=(x_1,\dots,x_{j-1},0,x_{j+1},\dots,x_n), j=1,\dots,n.
\]
Under further assumptions on smoothness, the above mentioned conjecture is true. \\

\begin{theorem}\label{smooth}
Let $S$ be a smooth $n$-cube antichain. Then
\[\mathcal{H}^{n-1}(S)\leq\sum_{j=1}^n\mathcal{H}^{n-1}(\pi_j(S)).\]
In particular, 
we have $ \mathcal H^{n-1}(S)\leq n \sigma_{n-1}$.
\end{theorem}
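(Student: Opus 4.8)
The plan is to treat a smooth $n$-cube antichain as a $C^1$ embedded hypersurface $S\subseteq[0,1]^n$ carrying a unit normal field $\nu=(\nu_1,\dots,\nu_n)$, and to deduce the inequality from two ingredients: the antichain condition forces each coordinate projection $\pi_j$ to be \emph{injective} on $S$, and the area formula then expresses the measure of each shadow $\pi_j(S)$ as the integral of $|\nu_j|$ over $S$. The final estimate is a one-line consequence of the comparison between the $\ell^1$ and $\ell^2$ norms of the unit normal.

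First I would establish injectivity. If $\pi_j(\mathbf{x})=\pi_j(\mathbf{y})$ for distinct $\mathbf{x},\mathbf{y}\in S$, then $x_i=y_i$ for every $i\neq j$ while $x_j\neq y_j$; after relabelling we may assume $x_j<y_j$, so that $x_i\le y_i$ for all $i\in[n]$, contradicting the fact that $S$ is an antichain. Hence $\pi_j$ restricts to an injection on $S$ for each $j\in[n]$.

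Next I would invoke the area formula for the orthogonal projection $\pi_j$, which simply deletes the $j$-th coordinate, i.e.\ projects onto the facet $\{x_j=0\}$. Writing $S$ locally as a graph over the hyperplane $\{x_j=0\}$ shows that the Jacobian of $\pi_j|_S$ at a point with unit normal $\nu$ equals $|\nu_j|=|\langle\nu,e_j\rangle|$. Since $\pi_j|_S$ is injective, the multiplicity is everywhere $1$ and the area formula yields
\[\mathcal{H}^{n-1}(\pi_j(S))=\int_S|\nu_j|\,d\mathcal{H}^{n-1}.\]
Summing over $j$ and using that $\nu$ is a unit vector, so that $\sum_{j=1}^n|\nu_j|=\|\nu\|_1\ge\|\nu\|_2=1$ pointwise on $S$, gives
\[\sum_{j=1}^n\mathcal{H}^{n-1}(\pi_j(S))=\int_S\sum_{j=1}^n|\nu_j|\,d\mathcal{H}^{n-1}\ge\int_S 1\,d\mathcal{H}^{n-1}=\mathcal{H}^{n-1}(S),\]
which is the asserted inequality. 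For the ``in particular'' clause I would note that each $\pi_j(S)$ lies in the facet $\{x_j=0\}\cap[0,1]^n$, a unit $(n-1)$-cube of $(n-1)$-dimensional Hausdorff measure $\sigma_{n-1}$; hence $\mathcal{H}^{n-1}(\pi_j(S))\le\sigma_{n-1}$, and summing the $n$ shadows produces the bound $n\sigma_{n-1}$.

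The conceptual heart, and the place where the antichain hypothesis is genuinely used, is the injectivity step: it is precisely injectivity that upgrades the always-valid inequality $\mathcal{H}^{n-1}(\pi_j(S))\le\int_S|\nu_j|\,d\mathcal{H}^{n-1}$ into an equality, so that the $\ell^1$--$\ell^2$ comparison can be applied in the favourable direction. Note in particular that no sign constraint on the components of $\nu$ is required. I expect the only real technical care to be needed in justifying the Jacobian identity and the applicability of the area formula: covering $S$ by coordinate graphs, and observing that the normalisation constant of $\mathcal{H}^{n-1}$ (which produces the facet value $\sigma_{n-1}$) cancels on both sides, since the Jacobian factor $|\nu_j|$ and the norm comparison are dimensionless.
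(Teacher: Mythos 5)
Your proposal is correct and is essentially the paper's argument in coordinate-free form: the paper writes $S$ as a graph over $\pi_n(S)$ and bounds the area integrand via $\bigl(1+|\nabla f|^2\bigr)^{1/2}\leq 1+\sum_j\bigl|\tfrac{\partial f}{\partial x_j}\bigr|$, which is exactly your pointwise inequality $\|\nu\|_2\leq\|\nu\|_1$ for the unit normal $\nu=\bigl(-\nabla f,1\bigr)/\sqrt{1+|\nabla f|^2}$, and your identity $\mathcal{H}^{n-1}(\pi_j(S))=\int_S|\nu_j|\,d\mathcal{H}^{n-1}$ is the paper's change of variables via the maps $T_j$. The only (cosmetic) difference is that your version treats the $n$ projections symmetrically and does not require a single global graph representation.
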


The bound  $n \sigma_{n-1}$ for the $(n-1)$-dimensional measure of a smooth $n$-cube antichain is asymptotically sharp, as can be seen by the hypersurface 
\[
S_p=\left\{\mathbf{x}=(x_1,\dots,x_n):x_j\geq0\text{ for all $j$ and }\|\mathbf{x}\|_p^p:=\sum_{j=1}^nx_j^p=1\right\},\]
as $p\to+\infty$. Indeed, it is easily verified that as $p\to+\infty$ the $\ell^p$-unit ball $B_p=\{\mathbf{x}\in\mathbb R^n:\|\mathbf{x}\|_p\leq1\}$ converges with respect to the Hausdorff metric to the $\ell^\infty$-unit ball. However, it is well known (\cite[p. 219]{schneider}) that if a sequence of convex bodies $K_i$ converges to a convex body $K$ with respect to the Hausdorff metric, then $\mathcal H^{n-1}(\partial K_i)\to\mathcal H^{n-1}(\partial K)$. 

Our main result on the Hausdorff dimension of $(n,k,t)$-sets reads as follows. \\

\begin{theorem}\label{hausekr}
Fix positive integers $n,k,t$ such that $n\geq k >t$ and let  $A$ be an
$(n,k,t)$-set. Then the Hausdorff dimension of $A$ is at most $k-t$. 
\end{theorem}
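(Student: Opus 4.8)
The plan is to reduce the problem to a single support set and then exploit a single reference point. First I would decompose $A$ according to the support of its points: for each $k$-element subset $S\subseteq[n]$, set $A_S=\{\mathbf{x}\in A:\supp(\mathbf{x})=S\}$, where $\supp(\mathbf{x})$ is the set of indices on which $\mathbf{x}$ is strictly positive. Since every point of an $(n,k,t)$-set has exactly $k$ positive coordinates, $A=\bigcup_{S}A_S$ is a union over the finitely many $k$-subsets $S$ of $[n]$. As the Hausdorff dimension of a finite union equals the maximum of the dimensions of its members, it suffices to prove that $\dim_H A_S\le k-t$ for each fixed $S$.

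Now I would fix $S$, assume $A_S\neq\emptyset$ (otherwise there is nothing to prove), and choose a reference point $\mathbf{x}^0=(x_1^0,\dots,x_n^0)\in A_S$. The key observation is that for any $\mathbf{y}\in A_S$ the intersection condition, applied to the pair $\mathbf{x}^0,\mathbf{y}$, produces $t$ distinct indices on which both points carry equal and strictly positive values. Because $\mathbf{x}^0$ and $\mathbf{y}$ both have support exactly $S$, these $t$ indices must lie in $S$; that is, there is a $t$-element subset $T=T(\mathbf{y})\subseteq S$ with $y_i=x_i^0$ for all $i\in T$. Writing $A_{S,T}=\{\mathbf{y}\in A_S: y_i=x_i^0\text{ for all }i\in T\}$ for each $T\in\binom{S}{t}$, this yields the finite cover
\[ A_S\subseteq\{\mathbf{x}^0\}\cup\bigcup_{T\in\binom{S}{t}}A_{S,T}. \]

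Finally, I would observe that each piece $A_{S,T}$ is confined to an affine subspace of dimension $k-t$: the $n-k$ coordinates outside $S$ vanish, the $t$ coordinates in $T$ are pinned to the fixed values $x_i^0$, and only the $k-t$ coordinates indexed by $S\setminus T$ remain free. A subset of a $(k-t)$-dimensional affine subspace has Hausdorff dimension at most $k-t$, so $\dim_H A_{S,T}\le k-t$. Taking the maximum over the finitely many sets in the cover gives $\dim_H A_S\le k-t$, and taking the maximum over the finitely many supports $S$ yields $\dim_H A\le k-t$. I do not expect a genuine obstacle here: the whole argument rests on recognising that one reference point together with the $t$-intersection condition traps the remainder of $A_S$ inside a finite union of $(k-t)$-dimensional slabs, and every union involved is finite, so the stability of Hausdorff dimension under finite unions applies without any subtlety.
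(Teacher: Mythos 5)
Your proposal is correct and follows essentially the same route as the paper: both reduce to a fixed $k$-element support set and then use the fact that the $t$-intersection condition against a single reference point traps the set in finitely many $(k-t)$-dimensional affine slabs, concluding by countable stability of Hausdorff dimension. The only cosmetic difference is that the paper packages the reference-point step via the difference set $A-A$ (noting $a-A\subseteq A-A$ and that $A-A$ lies in a finite union of coordinate subspaces), whereas you cover $A_S$ directly by the sets $A_{S,T}$; the two arguments are equivalent.
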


Notice that the set consisting of all points $(x_1,\ldots,x_n)$ that have $k$ non-zero coordinates 
and whose first $t$ coordinates satisfy $x_j=\alpha_j$ for some $\alpha_j\in (0,1]$, $j \in [t]$, is an $(n,k,t)$-set whose Hausdorff dimension 
equals $k-t$. Therefore, the bound in Theorem \ref{hausekr} is sharp. Given Theorem \ref{hausekr}, it is natural 
to ask for upper bounds on the $(k-t)$-dimensional Hausdorff measure of an $(n,k,t)$-set. 
The following result implies that the sets described above have maximum $(k-t)$-dimensional measure and may be seen as a continuous analogue of the Erd\H{o}s-Ko-Rado theorem.  \\

\begin{theorem}\label{measekr} 
Fix positive integers $n,k,t$ such that $n\geq k > t$ and suppose that $A$ is an $(n,k,t)$-set. 
Then $\mathcal{H}^{k-t}(A)\leq \binom{n-t}{k-t} \sigma_{k-t}$. 
\end{theorem}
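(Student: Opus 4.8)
The plan is to mimic, in the continuous setting, the structure of the Erd\H{o}s--Ko--Rado theorem, organising the argument around the supports of the points of $A$. First I would record that each point $\mathbf{x}\in A$ has a support $\supp(\mathbf{x})\subseteq[n]$ of size exactly $k$, so that only finitely many (at most $\binom{n}{k}$) supports occur in $A$. Writing $A_S$ for the set of points with support $S$, this yields a finite partition and hence $\mathcal{H}^{k-t}(A)=\sum_S\mathcal{H}^{k-t}(A_S)$, the sum ranging over the occurring supports. The defining condition then splits into two kinds of constraints: a \emph{within-support} constraint (any two points of $A_S$ agree in value on at least $t$ of the $k$ coordinates of $S$) and a \emph{cross-support} constraint (a point of $A_S$ and a point of $A_{S'}$ agree in value on at least $t$ positive coordinates, which necessarily lie in $S\cap S'$). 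The heart of the matter is that these constraints interlock exactly as in the discrete Erd\H{o}s--Ko--Rado theorem, forcing the total measure down to the ``star'' value $\binom{n-t}{k-t}\sigma_{k-t}$.

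The second step is geometric and supplies the constant $\sigma_{k-t}$. Using the within-support constraint I would argue that a subset of the unit $k$-cube whose points pairwise agree on at least $t$ coordinates is, up to an $\mathcal{H}^{k-t}$-null set, contained in a finite union of $(k-t)$-dimensional coordinate flats $\{x_i=c_i:i\in T\}$ with $|T|=t$. This is immediate when $k-t=1$, where the condition says any two points differ in at most one coordinate, and I would treat the general case by induction on $k-t$. Each such flat piece lies in a unit $(k-t)$-cube, and since orthogonal projection onto the complementary $(k-t)$ coordinates is an isometry on it, its $\mathcal{H}^{k-t}$-measure is at most that of the unit $(k-t)$-cube, namely $\sigma_{k-t}$. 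At this point the theorem is reduced to a purely combinatorial assertion: the flat pieces produced above, across all supports, carry total $\mathcal{H}^{k-t}$-mass at most $\binom{n-t}{k-t}\,\sigma_{k-t}$.

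The final and hardest step is this count, which is a genuinely continuous Erd\H{o}s--Ko--Rado statement rather than a corollary of Theorem \ref{ekr}: already for $n=3,k=2,t=1$ the $t$-intersecting family of supports can have more than $\binom{n-t}{k-t}$ members, yet the cross-support value constraints collapse the excess pieces to lower-dimensional sets (here, to single points), so that the bound $\binom{n-t}{k-t}\sigma_{k-t}$ persists for every $n\ge k>t$. To capture this I would discretise the positive coordinate values at scale $1/m$, turning $A$ into a $t$-intersecting family of ``coloured'' $k$-sets on the ground set $[n]\times\{1,\dots,m\}$ (each active coordinate of a support decorated with one of $m$ values), and establish that such a family has at most $\bigl(1+o(1)\bigr)\binom{n-t}{k-t}m^{k-t}$ members as $m\to\infty$; the colouring restores the optimality of the star for all $n$, circumventing the small-$n$ exceptions of the ordinary theorem. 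Passing to the limit $m\to\infty$ and combining with the geometric step yields the claim. I expect the main obstacle to be precisely this sharp colour-weighted count valid for all $n$ (small $n$ included), together with checking that the limiting procedure produces exactly the constant $\sigma_{k-t}$ and not a lossy covering constant; an alternative continuous shifting/compression argument that moves the ``kernel'' to $[t]$ and then projects onto the $\binom{n-t}{k-t}$ subsets $R\subseteq[n]\setminus[t]$ would avoid the limit, but would require showing that compression preserves $\mathcal{H}^{k-t}$.
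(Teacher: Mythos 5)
Your overall plan---partition by support, discretise the positive values at scale $1/m$ to obtain a $t$-intersecting family in the function lattice $H_{n,k,m}$, and invoke an EKR-type count---correctly identifies the discrete model the paper uses. But there are two genuine gaps. First, your ``geometric step'' (that a set whose points pairwise agree in value on at least $t$ coordinates is, up to an $\mathcal H^{k-t}$-null set, contained in finitely many coordinate flats $\{x_i=c_i: i\in T\}$) is asserted with only a vague induction on $k-t$; this statement is essentially equivalent to the hard structural theorem you would need anyway, and the cross-support value constraints (which you set aside) are exactly what make it delicate. Second, and more fundamentally, the route ``sharp count $(1+o(1))\binom{n-t}{k-t}m^{k-t}$ plus pass to the limit'' cannot produce the constant $\sigma_{k-t}$: covering $A$ by $N_m$ grid cubes of side $1/m$ only yields $\mathcal H^{k-t}(A)\le N_m\,(\sqrt{k}/m)^{k-t}$, i.e.\ a bound of order $\binom{n-t}{k-t}k^{(k-t)/2}$, which exceeds $\binom{n-t}{k-t}\sigma_{k-t}$ by a dimensional constant. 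You flag this obstacle yourself but do not resolve it, and no refinement of the count alone can resolve it.

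The paper's proof circumvents both problems with a dichotomy you are missing. If $A$ is \emph{trivial} (there is a $t$-set $T$ and values $\alpha_1,\dots,\alpha_t$ fixed on $T$ for every point of $A$), then $A$ is \emph{literally contained} in the disjoint union of the $\binom{n-t}{k-t}$ unit $(k-t)$-cubes $\{\mathbf d\in[0,1]^n_F: d_{i_j}=\alpha_j\}$ with $T\subseteq F$, $|F|=k$, and the bound follows from monotonicity and subadditivity with the exact constant $\sigma_{k-t}$---no covering argument and hence no loss. If $A$ is non-trivial, then for $m$ large the discretised family $\mathcal F_m\subseteq H_{n,k,m}$ is a \emph{non-trivial} $k$-uniform $t$-intersecting family, and the Bey--Engel / Erd\H{o}s--Seress--Sz\'ekely theorem (Theorem \ref{subcubesekr}) gives $|\mathcal F_m|=O(m^{k-t-1})$, whence $\overline{\dim}_B A\le k-t-1$ and $\mathcal H^{k-t}(A)=0$. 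So the sharp discrete count is used only to kill the non-trivial case by a dimension drop, where the lossy covering constant is irrelevant; the exact constant comes entirely from the trivial case. Without this dichotomy your argument, even if completed, would prove the theorem with a weaker constant.
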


The remaining part of our article is organised as follows. 

In Section \ref{s2}, we collect the results on $n$-cube antichains. In particular, 
we prove Theorems \ref{th:11}--\ref{smooth}. The first theorem is proved using a chain partition of an adequate poset. The first statements of Theorems \ref{th:2} and \ref{smooth} are obtained via an upper estimate of the Hausdorff measure of $S$ in terms of the sum of the Hausdorff measures of its injective projections. 
The second statement of Theorem \ref{th:2} is obtained by showing that devil's staircases are examples of $n$-cube antichains. 

In Section \ref{s3}, we collect the results on $(n,k,t)$-sets. We prove Theorem \ref{hausekr} by employing  estimates 
on the Hausdorff measure of the difference set $A-A$ and  
Theorem \ref{measekr} by applying a result on nontrivial intersections of integer vectors. 
Finally, in Section  \ref{s4} we state some conjectures.

\section{$n$-cube antichains}\label{s2}

The proof of
Theorem \ref{th:11} relies on the following 
Sperner-type result for integer vectors. \\

\begin{lemma}
\label{finitesperner}
Fix positive integers $n,m\ge 1$. 
Let $\mathcal{F}$ be a collection of $n$-tuples from the set $[m-1]_0$ which does not 
contain two different tuples $(d_1,\ldots,d_n)$ and $(k_1,\ldots,k_n)$ such that 
$d_i < k_i$ for all $i\in [n]$. Then  $|\mathcal{F}| \le n m^{n-1}$. 
\end{lemma}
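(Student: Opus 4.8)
The plan is to read the hypothesis as an antichain condition for the strict partial order on $[m-1]_0^n$ defined by $(d_1,\dots,d_n)\prec(k_1,\dots,k_n)$ if and only if $d_i<k_i$ for every $i\in[n]$, and then to exhibit an explicit partition of $[m-1]_0^n$ into chains for this order. Since $\mathcal{F}$ contains no two distinct comparable tuples, it meets each chain of the partition in at most one point, so $|\mathcal{F}|$ is bounded above by the number of chains. The whole argument thus reduces to producing a chain partition with few chains and counting them.

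First I would use the diagonal chains in the direction $\mathbf{e}=(1,\dots,1)$: for a tuple $\mathbf{t}\in[m-1]_0^n$, the set $\{\mathbf{t}+j\mathbf{e}:j\in\mathbb{Z}\}\cap[m-1]_0^n$ is a chain, because consecutive members differ by $\mathbf{e}$ and hence are strictly comparable in every coordinate. These chains partition the cube, two tuples lying in the same chain precisely when their difference is an integer multiple of $\mathbf{e}$. Each chain has a unique minimal element, obtained from any member $\mathbf{t}$ by subtracting $(\min_i t_i)\,\mathbf{e}$; this minimal element is characterised by having at least one coordinate equal to $0$. Consequently the number of chains equals the number of tuples in $[m-1]_0^n$ possessing a zero coordinate, namely $m^n-(m-1)^n$.

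It then remains to verify the elementary inequality $m^n-(m-1)^n\le n\,m^{n-1}$, which follows from the mean value theorem applied to $x\mapsto x^n$ on $[m-1,m]$ (the derivative $n\,x^{n-1}$ is at most $n\,m^{n-1}$ there), or directly from the binomial expansion. Combining this with the chain bound yields $|\mathcal{F}|\le m^n-(m-1)^n\le n\,m^{n-1}$, as claimed.

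I do not expect a serious obstacle here; the one point to get right is the choice of the $\mathbf{e}$-direction for the chains, since it is exactly this direction that makes consecutive tuples strictly comparable in all coordinates while simultaneously keeping the number of chains small. An axis-aligned decomposition would not respect the strict-domination order, so the diagonal decomposition is the natural one to attempt first. It is worth noting that the chain count in fact delivers the sharper bound $m^n-(m-1)^n$, which the statement relaxes to the cleaner quantity $n\,m^{n-1}$.
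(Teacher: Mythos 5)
Your proof is correct and follows essentially the same route as the paper: a partition of $[m-1]_0^n$ into diagonal chains in the direction $(1,\dots,1)$, each represented by its unique member having a zero coordinate. The only difference is cosmetic --- you count these representatives exactly as $m^n-(m-1)^n$ and then relax to $n\,m^{n-1}$, whereas the paper bounds them by $n\,m^{n-1}$ directly via a union over which coordinate vanishes.
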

\begin{proof} Given an $n$-tuple $\mathbf{d}=(d_1,\ldots,d_n)\in [m-1]_0^n$ let 
$d_{(1)}= \min \{d_i: i\in [n]\}$, $d_{(n)}= \max \{d_i: i\in [n]\}$  
and consider the class $D_{\mathbf{d}}$ consisting of all $n$-tuples of the form 
\[  (d_1 - d_{(1)} +j,\; d_2 - d_{(1)} +j, \ldots,\; d_n -d_{(1)}+j), \; \textrm{where} \; j \in \{0,1,\ldots,n-(d_{(n)}-d_{(1)})\} .\] 
Observe that each class $D_{\mathbf{d}}$ forms a chain in the sense that 
whenever two $n$-tuples, say $(d_1,\ldots,d_n)$ and $(k_1,\ldots,k_n)$, belong to the same class $D_{\mathbf{d}}$ then we either have $d_i < k_i$ for all $i\in [n]$, or $d_i >k_i$ for all 
$i\in [n]$.
This implies that for every $n$-tuple $\mathbf{d}$ at most one element from $D_{\mathbf{d}}$ 
can belong to $\mathcal{F}$. Clearly, every $n$-tuple belongs to some chain 
and different chains are disjoint. Therefore the result will follow once we show that there are at most 
$n m^{n-1}$ chains. To this end, let $D_{\mathbf{d}}$ be the chain 
corresponding to  $\mathbf{d}=(d_1,\ldots,d_n)$. If the element $d_{(1)}$ is in the $\ell$-th coordinate of $\mathbf{d}$ then the $\ell$-th  
coordinate of $(d_1 - d_{(1)} ,d_2 - d_{(1)} , \ldots,d_n -d_{(1)})$ equals zero. 
This means that we can choose from every class $D_{\mathbf{d}}$ an $n$-tuple having a zero coordinate and the number of such $n$-tuples is at most $n m^{n-1}$. 
\end{proof}
Let 
\[
I_{j,m}=
\begin{cases}
[\frac{j}{m},\frac{j+1}{m}) &\text{ if } j \in [m-2]_0,\\
[\frac{j}{m},\frac{j+1}{m}] &\text{ if } j =m-1.
\end{cases}
\]

\begin{proof}
[Proof of Theorem \ref{th:11}]
Since $\dim_H S\le \overline{\dim}_B S$ for any $n$-cube antichain $S$,  it suffices to show that $\overline{\dim}_B S\le n-1$ for any $n$-cube antichain $S$. 
For each integer $m\ge 1$, write the unit $n$-cube as a union of  
cubes all of whose sides are equal to $\frac{1}{m}$.  More precisely, write the unit $n$-cube as a union 
of cubes of the form
\[
C(d_1,\ldots, d_n):=I_{d_1,m} \times I_{d_2,m} \times \dots \times I_{d_n,m},
\]
where  $d_i\in [m-1]_0$ for all $i \in [n]$. Notice that each cube $C(d_1,\ldots, d_n)$ 
can be uniquely identified by the vector $(d_1,\ldots, d_n)\in [m-1]_0^n$. 
Fix an $n$-cube antichain $S$, and let $N_{1/m}(S)$ be the number of cubes $C(d_1,\ldots, d_n)$ that have non-empty intersection with $S$. We claim that 
$N_{1/m}(S)\le n m^{n-1}$. 
Indeed, for every two different cubes  
$C(d_1,\ldots, d_n)$ and $C(k_1,\ldots, k_n)$ that have non-empty intersection with $S$ the 
corresponding $n$-tuples do not satisfy $d_i < k_i$ for all $i\in [n]$ and hence the claim follows from 
Lemma \ref{finitesperner}.

Consequently,
\[
\overline{\dim}_B S\le \limsup_{m\rightarrow \infty}\frac{\log N_{1/m}(S)}{-\log\frac{1}{m}}\le \limsup_{m\rightarrow \infty}\frac{\log (n m^{n-1})}{\log m}=n-1,
\] 
as required. 
\end{proof}

We proceed with the proof of  Theorem \ref{th:2} a).  In fact, we provide two 
proofs of this statement. 
The first proof exploits the fact that the projections of a $2$-cube antichain are injective. 

\begin{proof}[First proof of Theorem \ref{th:2} a)]
Here we denote the projections $\pi_1$ and $\pi_2$ by $\pi_x$ and $\pi_y$, respectively, since they are projections into the $x$- and $y$-axis, respectively.
We show that 
\begin{equation}\label{eq:1}
\mathcal{H}^1(S) \leq \mathcal{H}^1(\pi_x(S)) + \mathcal{H}^1(\pi_y(S))  
\end{equation}
and the result follows. 
Note that since $S$ is a $2$-cube antichain the projections are injective. 
For two sets $A,B \subseteq [0,1]$ let $A < B$ if $a < b$ for all $a \in A$ and $b \in B$.
Let $\{Q_i\}$ and $\{R_j\}$ be covers of $\pi_x(S)$ and $\pi_y(S)$, respectively, where without loss of generality
$Q_1 < Q_2 < \cdots$ and $R_1 < R_2 < \cdots$ Define the sets 
$Q_{ij}=\{x \in Q_i: \exists y \in R_j \text{ such that } (x,y) \in S\}$ and 
$R_{ij}=\{y \in R_j: \exists x \in Q_i \text{ such that } (x,y) \in S\}$. 
Then 
\[
S \subseteq \bigcup_{ij} (Q_{ij} \times R_{ij}).
\]
Therefore, using the triangle inequality,
\begin{equation}
\label{cover}
\mathcal{H}^1(S) \le \sum_{ij}\diam (Q_{ij} \times R_{ij}) \le \sum_{ij}\diam (Q_{ij}) + \sum_{ij} \diam(R_{ij}).
\end{equation}
Since $S$ is a $2$-cube antichain $Q_{i1} > Q_{i2} > \cdots$ and $R_{1j} > R_{2j} > \cdots$ for all $i,j$.
This implies that 
\begin{align*}
\sum_{j}\diam (Q_{ij}) &\le \diam(Q_i)\quad \text{for all }i,\\
\sum_{i}\diam (R_{ij}) &\le \diam(R_j)\quad \text{for all }j.
\end{align*}
Together with \eqref{cover} this yields
\[
\mathcal{H}^1(S) \le \sum_{i}\diam (Q_{i}) + \sum_{j} \diam(R_{j}).
\]
The result follows by taking the infimum with respect to all covers.
\end{proof}

We continue with a second proof of Theorem \ref{th:2} a), which is based upon the following well-known result regarding the Hausdorff measure of the image of Lip\-schitz functions, cf. \cite[p. 24]{Falconer_1990}. Recall that a function 
$f:F \subseteq \mathbb{R}^n\to\mathbb{R}^m$ is 
\emph{Lipschitz with constant $c$} if 
\[ |f(\mathbf{x})-f(\mathbf{y})| \leq c\cdot |\mathbf{x}-\mathbf{y}|  \; \text{for all} \; \mathbf{x},\mathbf{y}\in F . \]

\begin{lemma}\label{Lipschitz_lemma}
Fix positive integers $n,m$ and let $F\subseteq \mathbb{R}^n$. If $f: F\to\mathbb{R}^m$ is a Lipschitz function with constant $c$ then $\mathcal{H}^s(f(F))\leq c^s \mathcal{H}^s(F)$.
\end{lemma}

The following proof exploits the fact that the "diagonal projections" of a Sperner subset of the 
unit square are injective whose inverse is Lipschitz of constant $1$. 

\begin{proof}[Second proof of Theorem \ref{th:2} a)]
Denote $Q_1 = [0,1]^2\cap \{(x,y): x\ge y\}$ and $Q_2 = [0,1]^2\setminus Q_1$.  
Let $S_1= Q_1 \cap S$ and $S_2 = Q_2\cap S$. 
Now consider the diagonal projection $\phi_1$ of $S_1$ into $[0,1]\times \{0\}$ 
defined by $\phi_1(x,y) = (x-y,0)$. 
Notice that the assumption that $S_1$ is a $2$-cube antichain implies that $\phi$ is a bijection of $S_1$ onto its image.
Let $(a,0),(b,0)\in \phi_1(S_1)$ with $a<b$ and look at the inverse images,  $\phi_1^{-1}(a,0), \phi_1^{-1}(b,0)$.
Using again that $S_1$ is a $2$-cube antichain, it follows that these are of the form  
$\phi_1^{-1}(a,0) = (x,y)$ and $\phi_1^{-1}(b,0) = (x+\epsilon_1, y-\epsilon_2)$, for some  $x,y\in [0,1]$ and $\epsilon_1,\epsilon_2>0$. 
Now notice that 
\[ |\phi_1^{-1}(a,0)- \phi_1^{-1}(b,0) | = \sqrt{\epsilon_1^2 + \epsilon_2^2} \leq \epsilon_1+\epsilon_2 = |a-b| .  \]
This implies that $\phi_1^{-1}$ is Lipschitz with constant $1$ and  Lemma \ref{Lipschitz_lemma} yields
\[\mathcal{H}^1(S_1)=\mathcal{H}^1(\phi_1^{-1}(\phi_1(S_1))) \leq \mathcal{H}^1(\phi_1(S_1)).\]  
The analogous argument with $\phi_2:S_2\to[0,1] \times \{0\}$, $\phi_2(x,y)=(y-x,0)$, yields 
\[\mathcal H^1(S_2)\leq\mathcal H^1(\phi_2(S_2)).\]
Therefore $\mathcal H^1(S)\leq\mathcal H^1(\phi_1(S_1))+\mathcal H^1(\phi_2(S_2))\leq 2$.
\end{proof}

It remains to show part b) of Theorem \ref{th:2}, i.e., that 
the bound is sharp. The proof of this statement requires the following result from measure theory (see \cite[Proposition 5.5.4]{Bogachev}). \\

\begin{lemma}\label{bogachev} Let $f:[0,1]\to [0,1]$ be a function and let $E$ be a measurable set such that at every point of $E$ the function $f(\cdot)$ is differentiable. 
Then 
\[ \lambda(f(E)) \leq \int_E |f'(x)|\; dx , \]
where $\lambda(\cdot)$ denotes the $1$-dimensional Lebesgue measure. 
\end{lemma}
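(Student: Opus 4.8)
The statement is the one-dimensional instance of the inequality $\lambda^*(f(E))\le\int_E|f'|\,dx$, and the plan is to prove it by a Vitali covering argument, working throughout with outer measure $\lambda^*$ since $f(E)$ need not be Lebesgue measurable. First I would dispose of the trivial case $\int_E|f'|\,dx=\infty$ and assume the integral is finite; note that $E\subseteq[0,1]$, so $\lambda(E)\le1$. The engine of the proof is the following local estimate extracted from pointwise differentiability: given $\epsilon>0$, for each $x\in E$ there is $\delta_x>0$ such that every interval $I\ni x$ with $\lambda(I)\le\delta_x$ satisfies $\lambda^*(f(I))\le\operatorname{diam} f(I)\le(|f'(x)|+\epsilon)\,\lambda(I)$. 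This follows from $f(y)-f(z)=f'(x)(y-z)+o(|y-x|)+o(|z-x|)$ for $y,z\in I$, the error term being at most $\epsilon\,\lambda(I)$ once the differentiability tolerance at $x$ is chosen small enough.

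Next I would slice $E$ according to the size of the derivative. For $k\ge0$ set $E_k=\{x\in E:k\epsilon\le|f'(x)|<(k+1)\epsilon\}$, so that $E=\bigcup_k E_k$ and on $E_k$ the local estimate reads $\lambda^*(f(I))\le(k+2)\epsilon\,\lambda(I)$ for every sufficiently short admissible interval $I\ni x$. The family of all such intervals is a fine (Vitali) cover of $E_k$. Enclosing $E_k$ in an open set $U_k$ with $\lambda(U_k)\le\lambda(E_k)+\eta_k$ and keeping only the intervals contained in $U_k$, the Vitali covering lemma produces a countable disjoint subfamily $\{I_j\}$ that covers $E_k$ up to a set $Z_k$ of measure zero and satisfies $\sum_j\lambda(I_j)\le\lambda(U_k)\le\lambda(E_k)+\eta_k$.

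Then by countable subadditivity $\lambda^*(f(E_k))\le\sum_j\lambda^*(f(I_j))+\lambda^*(f(Z_k))\le(k+2)\epsilon(\lambda(E_k)+\eta_k)+\lambda^*(f(Z_k))$. The image $f(Z_k)$ is again null, since a set on which $f$ is differentiable and which has measure zero is mapped to a null set; this is proved by the same local estimate applied to a cover of $Z_k$ of arbitrarily small total length. Summing over $k$, using $\sum_k k\epsilon\,\lambda(E_k)\le\int_E|f'|\,dx$ together with $\sum_k\lambda(E_k)=\lambda(E)\le1$, I obtain $\lambda^*(f(E))\le\int_E|f'|\,dx+2\epsilon+\sum_k(k+2)\epsilon\,\eta_k$; choosing $\eta_k\le \eta\,2^{-k}/(k+2)$ bounds the last sum by $\epsilon\eta$, and letting first $\eta\to0$ and then $\epsilon\to0$ completes the argument.

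The main obstacle I anticipate is the bookkeeping in the summation over the slices: the enlargement error $\sum_k(k+2)\epsilon\,\eta_k$ carries the diverging weight $k+2$, so the approximating open sets $U_k$ must be chosen with geometrically decaying slack $\eta_k$ to keep it negligible. The second delicate point is that $f(E)$, and each $f(E_k)$, need not be measurable, which forces the consistent use of outer measure together with the fact that differentiable functions satisfy Lusin's condition (N) on null sets; both become routine once the local estimate of the first paragraph is established.
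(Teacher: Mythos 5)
The paper does not actually prove this lemma: it is quoted without proof from \cite[Proposition~5.5.4]{Bogachev}, so there is no internal argument to compare yours against. What you have written is essentially the standard textbook proof of that proposition (and close in spirit to Bogachev's own): extract the local estimate $\operatorname{diam} f(I)\le(|f'(x)|+\epsilon)\,\lambda(I)$ from pointwise differentiability, slice $E$ according to the size of $|f'|$, run a Vitali covering on each slice inside a tight open neighbourhood, and sum with geometrically decaying slack. The overall structure and the final bookkeeping are correct.

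Two points deserve tightening. First, your summation uses the measurability of the slices $E_k$: both $\sum_k k\epsilon\,\lambda(E_k)\le\int_E|f'|\,dx$ and $\sum_k\lambda(E_k)=\lambda(E)$ require $f'|_E$ to be measurable, so that the $E_k$ partition $E$ in the measure-theoretic sense. This is implicit in the statement anyway --- otherwise $\int_E|f'|\,dx$ is not defined --- and it is automatic when $f$ itself is measurable (as in the paper's application to the continuous devil's staircase), but since you start from ``$f$ an arbitrary function'' it should be recorded as a hypothesis or verified. Second, the disposal of the Vitali residual $Z_k$ is too quick as written: the local estimate applies to an interval $I$ only when $I$ contains a point $x$ of the slice with $\lambda(I)\le\delta_x$, so ``a cover of $Z_k$ of arbitrarily small total length'' does not automatically consist of admissible intervals, and re-running Vitali on $Z_k$ would just produce a new exceptional set. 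The standard fix is to stratify $Z_k=\bigcup_m\{x\in Z_k:\delta_x>1/m\}$, cover each stratum by intervals of length less than $1/m$ and arbitrarily small total length, and discard those missing the stratum; every surviving interval is then admissible and the estimate closes. (This is exactly the proof that a function differentiable at every point of a null set maps it to a null set, i.e.\ Lusin's condition (N) on the set of differentiability.) With these two repairs your argument is a complete and correct proof of the cited result.
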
 

Now we can prove the existence of "maximum'' $2$-cube antichains in the unit square. 

 \begin{proof}[Proof of Theorem \ref{th:2} b)]
Let $f:[0,1]\to[0,1]$ be a continuous, strictly increasing function having zero derivative almost everywhere. 
An example of such a function can be found in \cite{Zaanen_Lux} and is referred to as \emph{devil's staircase}.  
We divide the graph $S:=\{(x,f(x)):x\in[0,1]\}$ into two parts, namely, 
$A=\{(x,f(x): f'(x)=0)\}$ and $B = S\setminus A$. 
Since $f'=0$ almost everywhere, the projection of $A$ into the $x$-axis has measure $1$ and so 
$\mathcal{H}^1(A) \ge 1$. 
From Lemma \ref{bogachev}, it follows that the set $f(\{x:f'(x)=0\})$ has measure zero, which in turn implies that the projection of $B$ into the $y$-axis has measure $1$. Thus $\mathcal{H}^1(B) \ge 1$.
Putting these two bounds together, we conclude 
\[ \mathcal{H}^1(S) = \mathcal{H}^1(A) + \mathcal{H}^1(B) \ge 2  \]
and therefore, by (\ref{eq:1}), we have $\mathcal{H}^1(S)=2$. 
Now the result follows by observing that the function $1-f(\cdot)$ is strictly decreasing 
and therefore its graph is a $2$-cube antichain. 
\end{proof}

In higher dimensions, the situation is not as satisfactory. Using the argument in Theorem \ref{th:11}, one readily shows that if $S$ is an $n$-cube antichain, then $\mathcal H^{n-1}(S)\leq n\cdot n^{\frac{n-1}2}$, which is far away from the conjectured $\mathcal H^{n-1}(S)\leq n \cdot \sigma_{n-1}$. However, the conjecture can be verified in the case of smooth hypersurfaces.  

\begin{proof}[Proof of Theorem \ref{smooth}] 
If $\mathbf{x}=(x_1,\ldots,x_{n-1})$ is a vector in the cube $[0,1]^{n-1}$, 
let $\mathbf{x}_j$ denote the vector $(x_1,\ldots,x_{j-1},x_{j+1},\ldots,x_{n-1})$, for $j=1,\ldots,n-1$.
We note that the projections $\pi_j$ restricted to $S$ are injective, and there exists a smooth $f:\pi_n(S)\to[0,1]$ whose graph is $S$. 
For $j=1,\dots,n-1$, the function $T_j(\mathbf x)=(\mathbf x_j,f(\mathbf x_j))$ is injective, the absolute value of its Jacobian is $|\frac{\partial f}{\partial x_j}|$, and $T_j(\pi_n(S))=\pi_j(S)$. Hence, using the area formula (see \cite[Section $3.3.4$]{Evans_Gariepy}), the surface area of $f$ can be estimated as 

\begin{eqnarray*}
\frac{1}{\sigma_{n-1}}\mathcal{H}^{n-1}(S) &=&\int_{\pi_n(S)}(1+|\nabla f|^2)^{1/2}\\
 &\leq& \mathcal L^{n-1}(\pi_n(S))+\int_{\pi_n(S)}|\nabla f|\leq \mathcal L^{n-1}(\pi_n(S))+\sum_{j=1}^{n-1}\int_{\pi_n(S)}\left|\frac{\partial f}{\partial x_j}\right| \\
&=&\mathcal L^{n-1}(\pi_n(S))+\sum_{j=1}^{n-1}\int_{T_j(\pi_n(S))}1=\sum_{j=1}^n\mathcal L^{n-1}(\pi_j(S)) .
\end{eqnarray*}
Hence
\[
\mathcal{H}^{n-1}(S) \le \sum_{j=1}^n\mathcal{H}^{n-1}(\pi_j(S)),
\]
and the result follows. 
\end{proof}

\section{$(n,k,t)$-sets}\label{s3}

In this section, we collect our results regarding $(n,k,t)$-sets. We begin with the proof of
Theorem \ref{hausekr} using difference sets. We note that the result can be also easily derived from the proof of Theorem \ref{measekr}.

\begin{proof}[Proof of Theorem \ref{hausekr}]
Given a subset $F$ of $[n]$ with cardinality $k$, let $[0,1]^n_F$ denote the cartesian 
product $X_1\times \cdots \times X_n$, where $X_i = \{0\}$ when $i\notin F$, and 
$X_i = [0,1]$ when $i\in F$. 
Now let $A$ be any $(n,k,t)$-set. Clearly, $A$ is contained in $\cup_F [0,1]^n_F$, where the union runs over all subsets of $[n]$ of cardinality $k$. Since the Hausdorff dimension is stable under finite unions (see \cite[Chapter 3]{Falconer_1990}), it is enough to show that the Hausdorff dimension of $A_F = A \cap [0,1]^n_F$ is at most $k-t$. 
We may consider $A_F$ as a $(k,k,t)$-set (in the unit $k$-cube $[0,1]^k$) and from now on we write $A$ instead of $A_F$. 
Recall that the difference set $A-A$ is defined as $A-A=\{\mathbf{a}-\mathbf{b}: \mathbf{a},\mathbf{b}\in A\}$. 
Notice that for every 
$\mathbf{x}=(x_1,\ldots,x_k), \mathbf{y}=(y_1,\ldots,y_k)\in A$, there exist distinct indices $i_1,\dots,i_t\in [k]$ so that 
the $x_{i_j}-y_{i_j}=0$, $j \in [t]$, i.e., $\mathbf{x}-\mathbf{y} \in [0,1]^k_{[k] \setminus \{i_1,\dots,i_t\}}$.  
This implies that 
\[ A-A \subseteq \bigcup_{I\subseteq [k]: |I|=t} [0,1]^k_{I} . \]
Obviously, for each $I$ with $|I|=t$, we have $\dim_H([0,1]^k_{[k] \setminus I}) = k-t$ and hence $\dim_H(A-A) \le k-t$.
Fix some  $a\in A$. Then $a-A\subseteq A-A$ and therefore 
\[
\dim_H(A) = \dim_H(a-A) \le \dim_H(A-A) \le k-t,
\]
as required. 
\end{proof}

For the proof of Theorem \ref{measekr} we need the following result of Bey and the first author \cite{BE} which was reproved in a different way by P.L. Erd\H{o}s, Seress and Sz\'{e}kely \cite{ESS}.
In order to formulate the result we need some further notations and definitions.
For $\mathbf{d} \in [m]_0^n$, let $\supp(\mathbf{d})=\{i \in [n]: d_i > 0\}$ be the \emph{support} of $\mathbf{d}$
and let $\one(\mathbf{d})=\{i \in [n]: d_i = 1\}$ be the \emph{index set of ones} of $\mathbf{d}$.
Let 
\[
H_{n,k,m} = \{\mathbf{d} \in [m]_0^n: |\supp(\mathbf{d})|=k\}. 
\]

A family $\mathcal{F} \subseteq  [m]_0^n$  is called   \emph{$k$-uniform $t$-intersecting}
if $\mathcal{F} \subseteq H_{n,k,m}$ and for all $\mathbf{d},\mathbf{e} \in F$ there are distinct indices $i_1,\dots,i_t \in \supp(\mathbf{d}) \cap \supp(\mathbf{e})$ such that $d_{i_j}=e_{i_j}$ for all $j \in [t]$.
The $k$-uniform $t$-intersecting family $F$ is called \emph{trivial} if there are distinct indices $i_1,\dots,i_t$ and numbers $a_1,\dots,a_t \in [m]$ such that
$d_{i_j}=a_j$ for all $j \in [t]$, otherwise it  is called \emph{non-trivial}. Examples of non-trivial $k$-uniform $t$-intersecting families are
\begin{align*}
F_1&=\{\mathbf{d} \in H_{n,k,m} \text{ and } [t] \subseteq \one(\mathbf{d}) \text{ and }\one(\mathbf{d}) \cap \{t+1,\dots,\min(k+1,n)\} \neq \emptyset\} \\
&\cup \{\mathbf{d} \in H_{n,k,m} \text{ and } |\one(\mathbf{d})\cap [t]|=t-1 \text{ and } \{t+1,\dots,\min(k+1,n)\}  \subseteq \one(\mathbf{d})\},\\
F_2&=\{\mathbf{d} \in H_{n,k,m} \text{ and } |\one(\mathbf{d}) \cap[t+2]| \ge t+1 \}.
\end{align*}

\begin{theorem}[\cite{BE}, \cite{ESS}] \label{subcubesekr} 
Fix positive integers $n,k,t$ such that $n\geq k > t$. There is an integer $m_0(n,k,t)$ such that for all $m > m_0(n,k,t)$ every non-trivial $k$-uniform $t$-intersecting family in $[m]_0^n$ has size bounded as follows:
\[
|\mathcal{F}| \le \max(|F_1|,|F_2|).
\]
\end{theorem}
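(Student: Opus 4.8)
The plan is to view each $\mathbf{d}\in H_{n,k,m}$ as a $k$-element \emph{labelled set} $P(\mathbf{d})=\{(i,d_i):i\in\supp(\mathbf{d})\}$ inside the grid $[n]\times[m]$, so that the $t$-intersecting hypothesis becomes $|P(\mathbf{d})\cap P(\mathbf{e})|\ge t$ for all members, and $\mathcal{F}$ becomes a non-trivial $t$-intersecting family of $k$-subsets of $[n]\times[m]$ constrained so that the points of each member have pairwise distinct first coordinates. In this picture the assertion is a Hilton--Milner/Frankl-type theorem: the two candidate families $F_1$ (Hilton--Milner type) and $F_2$ (Frankl type) are exactly the labelled analogues of the families known to be the largest \emph{non-trivial} $t$-intersecting set systems once the ground set is large, and the hypothesis $m>m_0(n,k,t)$ plays the role of Frankl's threshold in Theorem \ref{ekr}.

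First I would introduce two kinds of compression (shifting) operators and record their basic properties. The position compressions $\sigma_{ij}$ with $i<j$ transfer a nonzero coordinate from position $j$ to a free position $i$, while the value compressions lower a label $d_i$ toward $1$; both preserve membership in $H_{n,k,m}$ and the cardinality $|\mathcal{F}|$, and a short case analysis shows they preserve the $t$-intersecting property. Iterating to a fixed point produces a fully compressed family in which labels are driven toward $1$ and supports toward $[k]$, which is precisely why the index set of ones $\one(\cdot)$, the block $[t+2]$, and the window $\{t+1,\dots,\min(k+1,n)\}$ appear in the descriptions of $F_1$ and $F_2$. For such a compressed family I would then prove a structural sub-lemma: if it is still non-trivial it must contain an explicit small ``witness'' of members whose pairwise $t$-intersections cannot all lie on one common $t$-pattern, and counting the members compatible with such a witness yields a bound of the form $\max(|F_1|,|F_2|)$.

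The main obstacle will be that compression can turn a non-trivial family into a trivial one --- the very phenomenon that separates the Hilton--Milner theorem from plain Erd\H{o}s--Ko--Rado. To handle it I would fix $\mathcal{F}$ of maximum size among non-trivial families and argue by dichotomy: either some admissible compression preserves non-triviality, so that I may assume $\mathcal{F}$ compressed and invoke the sub-lemma, or every compression destroys non-triviality, in which case the resulting rigidity forces $\mathcal{F}$ to coincide, up to the symmetries permuting coordinates and relabelling values, with $F_1$ or $F_2$ themselves. Tracking which $t$-patterns become ``common to all members'' after each shift, and excluding the intermediate configurations, is the technical heart of the argument.

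Finally I would close with the large-$m$ comparison. Both $|F_1|$ and $|F_2|$ are polynomials in $m$ of degree $k-t-1$, one less than the degree $k-t$ of the largest \emph{trivial} family $\binom{n-t}{k-t}m^{k-t}$, the drop recording exactly the cost of non-triviality. Their leading coefficients, computed as functions of $n,k,t$, show that neither family dominates across all parameter ranges, which is why the bound is stated as a maximum. The threshold $m_0(n,k,t)$ is then chosen large enough that the structural step leaves only these two configurations as the largest candidates and that every competing non-trivial family is of strictly smaller size; this is the labelled-set analogue of the threshold $n_0(k,t)$ in Theorem \ref{ekr}.
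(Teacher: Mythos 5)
First, note the ground truth here: the paper does not prove Theorem~\ref{subcubesekr} at all --- it is imported verbatim from \cite{BE} and \cite{ESS}, so there is no internal proof to compare your attempt against. Measured against the cited literature, your framing is sound: the labelled-set translation $P(\mathbf{d})=\{(i,d_i):i\in\supp(\mathbf{d})\}$ is exactly the function-lattice viewpoint of \cite{ESS}, your degree count is correct (both $|F_1|$ and $|F_2|$ are of order $m^{k-t-1}$, against $\binom{n-t}{k-t}m^{k-t}$ for the largest trivial family), and the analogy with the threshold $n_0(k,t)$ of Theorem~\ref{ekr} is apt. Your plan --- position and value compressions, then a Hilton--Milner/Frankl-type structural analysis of non-trivial compressed families --- is indeed the shape of the known proofs (\cite{BE} via Ahlswede--Khachatrian generating-set methods, \cite{ESS} via a different shifting analysis in the function lattice).

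The gap is that your proposal defers the entire mathematical content to two steps it names but does not carry out. The ``structural sub-lemma'' (a witness forcing the bound $\max(|F_1|,|F_2|)$) and the dichotomy when every compression destroys non-triviality are precisely where such theorems are hard, and your treatment of the second horn is an assertion, not an argument: you claim rigidity ``forces $\mathcal{F}$ to coincide, up to symmetry, with $F_1$ or $F_2$,'' but a maximum-size non-trivial family whose every admissible shift trivializes it need not visibly resemble either candidate, and excluding the intermediate configurations is exactly the extensive case analysis that the generating-set machinery of \cite{BE} exists to perform. Moreover, the fixed point of your compressions is generically trivial (values driven to $1$ on a common block make a $t$-pattern common to all members), so the ``bad'' horn is the typical case, not an edge case. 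Finally, counting members ``compatible with a witness'' can plausibly give $O(m^{k-t-1})$ --- which, via \eqref{bound}, is all the paper actually uses --- but the theorem as stated demands the exact bound $\max(|F_1|,|F_2|)$, and obtaining a sharp constant requires comparing all candidate extremal configurations against one another, the heart of the AK method. So: a correct roadmap faithful to the literature, but with the two decisive lemmas unproved it is not a proof.
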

Note that for fixed $n,k,t$
\begin{equation}
\label{bound}
 \max(|F_1|,|F_2|)=O(m^{k-t-1}) \text{ as } m \rightarrow \infty
\end{equation}
since there is a bounded number of choices for $\one{\mathbf{d}}$ and $|\one{\mathbf{d}| \ge t+1}$, i.e., there are at most ${k-t-1}$ free positions if the index set of ones is fixed.
Now we are ready to prove Theorem \ref{measekr}.

\begin{proof}[Proof of Theorem \ref{measekr}] 
Let $A$ be an $(n,k,t)$-set. 
Notice that if there exists a $t$-element set $T=\{i_1,\dots,$ $i_t\}$ $\subseteq [n]$ and real numbers $\alpha_1,\dots,\alpha_t \in (0,1]$ such that, for all $\mathbf{d} \in A$ and for all $j \in [t]$, we have
$d_{i_j}=\alpha_j$ then $A$ is contained in the disjoint union of sets
$\{\mathbf{d} \in [0,1]^n_F: d_{i_j}=\alpha_j$, $j \in [t]\}$, where $F$ is such that $T \subseteq F \subseteq [n]$ and $|F|=k$. These sets can be considered as unit $(k-t)$-cubes
and thus have Hausdorff measure $\sigma_{k-t}$. Since there are  $\binom{n-t}{k-t}$ choices for $F$ the inequality $\mathcal{H}^{k-t}(A) \le \binom{n-t}{k-t}\sigma_{k-t}$ follows.

Now assume the contrary for $A$, i.e., for each $t$-element set $T \subseteq [n]$ there exists  an index $i_T \in T$ and there exist elements $\mathbf{d},\mathbf{e} \in A$
such that $d_{i_T} \neq e_{i_T}$.
Let
\[
\delta = \min\{|d_{i_T} - e_{i_T}|: T \subseteq [n], |T|=t\}
\]
and consider integers $m > \max(\frac{1}{\delta},m_0(n,k,t))$, where $m_0(n,k,t)$ is the constant given  by Theorem \ref{subcubesekr}. 
Analogously to the proof of Theorem \ref{th:11} we take cubes of the form
\[
C(d_1,\ldots, d_n):=J_1\times J_2 \times \dots \times J_n,
\]
where, for all i, 
\[
J_i=\begin{cases}
I_{d_i-1,m}&\text{ if } d_i>0,\\
\{0\}&\text{ if } d_i=0
\end{cases}
\]
and $\mathbf{d} = (d_1,\ldots,d_n) \in H_{n,k,m}$. 
Let
$\mathcal{C}_m(A)$ be the set of cubes $C(d_1,\ldots, d_n)$  that have non-empty intersection with $A$ and let $\mathcal{F}_m$ be the corresponding set of $n$-tuples $(d_1,\dots,d_n)$.
Clearly, $\mathcal{C}_m(A)$ is a cover of $A$ and the assumption $\frac{1}{m}<\delta$ implies that 
$\mathcal{F}_m$ is a non-trivial $k$-uniform $t$-intersecting family in $[m]_0^n$.
Using Theorem \ref{subcubesekr} and (\ref{bound}), we obtain as in the proof of Theorem \ref{th:11}
\[
\overline{\dim}_B A\le \limsup_{m\rightarrow \infty}\frac{\log |\mathcal{F}_m|}{-\log\frac{1}{m}}\le \limsup_{m\rightarrow \infty}\frac{\log (O(m^{k-t-1}))}{\log m} \le k-t-1.
\] 
Consequently,
\[
\mathcal{H}^{k-t}(A) = 0
\]
and the result follows. 
\end{proof}

\section{Conjectures}\label{s4}

In this final section, we emphasize two conjectures which, 
from our point of view, are really challenging and interesting. \\

\begin{conjecture}
\label{projections}
Let $S$ be a subset of the unit $n$-cube. If all projections $\pi_j, j\in [n]$, are injective then 
\[ \mathcal{H}^{n-1}(S) \leq \sum_{j=1}^{n} \mathcal{H}^{n-1}(\pi_j(S)). \] 
In particular, the $(n-1)$-dimensional Hausdorff measure of an $n$-cube antichain is at most $n \sigma_{n-1}$. 
\end{conjecture}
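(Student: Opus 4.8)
The plan is to reduce the conjecture to a purely combinatorial covering argument, in the spirit of the first proof of Theorem~\ref{th:2}~a), built on top of an elementary ``box inequality'' that plays the role of the step $\sqrt{a^2+b^2}\le a+b$ used there. The smooth case (Theorem~\ref{smooth}) already uses only injectivity of the projections, not the full antichain property, so this general statement is the natural target; once it is established, the stated consequence $\mathcal H^{n-1}(S)\le n\sigma_{n-1}$ is immediate for an $n$-cube antichain, since its projections are injective and each $\pi_j(S)$ lies in a facet of the cube, a set of $\mathcal H^{n-1}$-measure $\sigma_{n-1}$.

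First I would record the pointwise inequality: for any box $B=I_1\times\cdots\times I_n$ with side lengths $\ell_k=\diam(I_k)$,
\[
\diam(B)^{\,n-1}\le\sum_{j=1}^n\diam(\pi_j(B))^{\,n-1}.
\]
Writing $a_k=\ell_k^2$ and $T=\sum_k a_k$, this reads $T^{(n-1)/2}\le\sum_j(T-a_j)^{(n-1)/2}$. For $n=2$ it is the subadditivity of $\sqrt{\cdot}$, and for $n\ge 3$ it follows from convexity of $t\mapsto t^{(n-1)/2}$ together with Jensen's inequality applied to the $n$ numbers $T-a_j$, whose sum is $(n-1)T$. This routine estimate is the correct higher-dimensional replacement for the triangle inequality on diameters.

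Next I would generalize the covering construction. For each $j$, fix a near-optimal cover $\{V^{(j)}_\alpha\}_\alpha$ of $\pi_j(S)$ by small boxes in the hyperplane $\{x_j=0\}$. For a tuple $\mathbf{i}=(i_1,\dots,i_n)$ set $C_{\mathbf{i}}=S\cap\bigcap_{j}\pi_j^{-1}(V^{(j)}_{i_j})$ and let $B_{\mathbf{i}}$ be the smallest box containing it. Each point of $S$ lands in some $V^{(j)}_{i_j}$ under every $\pi_j$, so the $B_{\mathbf{i}}$ cover $S$ and $\pi_j(B_{\mathbf{i}})\subseteq V^{(j)}_{i_j}$. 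Combining this with the box inequality,
\[
\mathcal H^{n-1}_\delta(S)\le\sum_{\mathbf{i}}\diam(B_{\mathbf{i}})^{\,n-1}\le\sum_{j=1}^n\sum_{\mathbf{i}}\diam(\pi_j(B_{\mathbf{i}}))^{\,n-1}.
\]
It would then remain to show that for each fixed $j$ the family $\{\pi_j(B_{\mathbf{i}})\}_{\mathbf{i}}$ is an \emph{efficient} cover of $\pi_j(S)$, i.e.\ that its cost $\sum_{\mathbf{i}}\diam(\pi_j(B_{\mathbf{i}}))^{n-1}$ does not exceed $\mathcal H^{n-1}(\pi_j(S))$ up to $o(1)$. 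Injectivity of $\pi_j$ guarantees that $S$, and hence each cell, is a graph over the hyperplane $\{x_j=0\}$, which is the information one hopes to exploit to stop the projected cells (for fixed $i_j$ and varying remaining indices) from piling up inside $V^{(j)}_{i_j}$.

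The main obstacle lies precisely in this last step, and it is where the $n=2$ argument fails to transfer. For $n=2$ the antichain hypothesis forces the projected cells to be \emph{ordered} disjoint subintervals of $V^{(j)}_{i_j}$ (the relation ``$Q_{i1}>Q_{i2}>\cdots$''), and in one dimension disjoint subintervals have total length at most that of the parent, so the cover cost cannot increase. Two genuinely new difficulties appear for $n\ge 3$. First, injectivity of projections is strictly weaker than the monotonicity supplied by the antichain condition, so one must find a substitute that still yields near-disjointness of the projected cells. Second, and more seriously, in dimension $n-1\ge 2$ even disjoint sub-boxes of a box $V$ need not satisfy $\sum\diam^{\,n-1}\le\diam(V)^{\,n-1}$: thin elongated pieces inflate the $(n-1)$-cost arbitrarily, as a decomposition of the unit square into $k$ strips of size $1\times\tfrac1k$ already shows. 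Hence disjointness alone cannot control the projected cover; one must also control the \emph{shape} of the cells, which is exactly what the area formula provides for free in the smooth case via $\sqrt{1+|\nabla f|^2}\le 1+\sum_j|\partial_jf|$. For this reason I would first attempt the rectifiable case by approximating $S$ by smooth graphs and invoking Theorem~\ref{smooth}, the delicate point being lower semicontinuity of $\mathcal H^{n-1}$ together with convergence of the projection measures; and I would treat a possible purely unrectifiable part separately, via projection theorems of Besicovitch--Federer type, aiming to show that injective coordinate projections are incompatible with a non-negligible unrectifiable part. Controlling cell shape in the non-smooth regime is, I expect, the crux of the whole problem.
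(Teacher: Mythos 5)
You should first be aware that this statement is stated in the paper as Conjecture~\ref{projections} and is left open there: the authors prove it only for $n=2$ (Theorem~\ref{th:2}~a), where the antichain hypothesis is used, not just injectivity) and for smooth hypersurfaces (Theorem~\ref{smooth}, via the area formula). So there is no proof in the paper to compare against, and your proposal, which you yourself present as a plan with an unresolved core step, is accordingly not a proof either. That said, your analysis is accurate and essentially reconstructs why the general case is hard. The preliminary box inequality $\diam(B)^{n-1}\le\sum_j\diam(\pi_j(B))^{n-1}$ is correct (your Jensen argument gives $\sum_j(T-a_j)^{(n-1)/2}\ge n\left(\tfrac{n-1}{n}\right)^{(n-1)/2}T^{(n-1)/2}$ and the constant exceeds $1$ for all $n\ge2$), and your identification of the obstruction is exactly right: in the $n=2$ proof the projected cells are ordered disjoint subintervals, and in one dimension disjointness controls total length, whereas for $n-1\ge2$ disjoint sub-boxes can have unbounded total $\diam^{n-1}$, so one needs shape control that injectivity alone does not supply. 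This is precisely the point at which the paper's first proof of Theorem~\ref{th:2}~a) fails to generalize and why the authors needed smoothness (where $\sqrt{1+|\nabla f|^2}\le1+\sum_j|\partial_jf|$ does the shape control for them).

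Two cautions about your proposed fallback route. First, lower semicontinuity of $\mathcal H^{n-1}$ under Hausdorff convergence of sets is false in general (the convergence result quoted in the paper is specific to boundaries of convex bodies), so approximating $S$ by smooth graphs and passing to the limit requires a genuinely new compactness or structure argument, not a routine limiting step; you would also need the projection measures $\mathcal H^{n-1}(\pi_j(S_k))$ not to overshoot $\mathcal H^{n-1}(\pi_j(S))$ in the limit, which is a separate issue. Second, the Besicovitch--Federer projection theorem controls \emph{almost every} projection direction, whereas here you have exactly $n$ prescribed coordinate directions; a purely unrectifiable set of positive $\mathcal H^{n-1}$-measure can in principle project injectively onto a null set in each of these particular hyperplanes, since injectivity of a map is perfectly compatible with its image being $\mathcal H^{n-1}$-null. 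So the hoped-for incompatibility between injective coordinate projections and a non-negligible unrectifiable part is itself an open claim, not a consequence of the standard projection theorems. In short: your diagnosis of the difficulty is correct and matches the state of the problem in the paper, but the conjecture remains unproved by your outline.
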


Theorem \ref{smooth} implies that the bound $n \sigma_{n-1}$ in Conjecture \ref{projections} is asymptotically sharp but we believe that the following stronger statement is true. \\

\begin{conjecture} For all $n \ge 3$ there exists an $n$-cube antichain whose $(n-1)$-dimensional 
Hausdorff measure equals $n\sigma_{n-1}$.
\end{conjecture}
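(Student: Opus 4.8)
The plan is to generalize the devil's staircase construction behind Theorem \ref{th:2} b) from the plane to $[0,1]^n$. The first thing to observe is that the extremal set cannot be the graph of a function. Indeed, if $S$ were a decreasing graph $x_n=g(x_1,\ldots,x_{n-1})$, then for the bound of Conjecture \ref{projections} to be attained one needs every projection $\pi_j(S)$ to have full measure $\sigma_{n-1}$ in its facet; a short fiberwise computation shows this forces $g\equiv 1$ on each lower face $\{x_j=0\}$ and $g\equiv 0$ on each upper face $\{x_j=1\}$ for $j\in[n-1]$, which is incompatible at the point of $[0,1]^{n-1}$ with $x_1=1$ and $x_2=\cdots=x_{n-1}=0$. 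Hence I would seek a genuine fractal antichain that ``folds'' through all $n$ coordinate directions. Concretely, I would build $S$ as the attractor of an iterated function system whose maps place scaled copies of $S$ along a descending staircase, so that comparability is destroyed at every scale and $S$ is an antichain, arranged symmetrically with respect to the $n$ coordinate directions. The base case $n=2$ is precisely the devil's staircase of Theorem \ref{th:2} b), and an inductive ``suspension'' of an extremal $(n-1)$-cube antichain in the new coordinate is an alternative route to the system.

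For the lower bound I would mimic the decomposition $S=A\cup B$ of the proof of Theorem \ref{th:2} b). The aim is to partition $S$, up to a set of measure zero, into Borel pieces $S_1,\ldots,S_n$ such that on each $S_j$ the projection $\pi_j$ is injective with $1$-Lipschitz inverse and $\pi_j(S_j)$ has full measure $\sigma_{n-1}$ in the facet $\{x_j=0\}$. Since $\pi_j$ is $1$-Lipschitz, Lemma \ref{Lipschitz_lemma} gives $\mathcal{H}^{n-1}(S_j)\ge\mathcal{H}^{n-1}(\pi_j(S_j))=\sigma_{n-1}$, and countable additivity of Hausdorff measure on the disjoint Borel pieces yields
\[
\mathcal{H}^{n-1}(S)=\sum_{j=1}^n\mathcal{H}^{n-1}(S_j)\ge n\sigma_{n-1}.
\]
The fractal must therefore be designed so that it spends an equal share of its mass aligned with each of the $n$ facets, exactly as the staircase splits into a horizontal part $A$ and a vertical part $B$.

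The upper bound is the delicate point. For $n=2$ it was supplied by Theorem \ref{th:2} a), namely inequality (\ref{eq:1}); for $n\ge3$ the corresponding estimate is precisely Conjecture \ref{projections}, which is open in general. Rather than proving that conjecture, I would try to compute $\mathcal{H}^{n-1}(S)$ exactly for the self-similar set $S$ by a mass-distribution argument: equip $S$ with its natural self-similar measure, verify the open set condition for the chosen system, and read off the exact value of the Hausdorff measure from the contraction ratios, checking that it equals $n\sigma_{n-1}$. Self-similarity reduces the global estimate to a finite computation at a single scale, which is what makes the exact value accessible even without Conjecture \ref{projections}.

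The main obstacle is reconciling three competing demands. The set $S$ must remain a \emph{monotone} antichain at every scale; its $n$ projections must \emph{simultaneously} be full and measure-preserving on the respective pieces, which, as noted above, no graph achieves, so the surface is forced to fold; and the resulting set must be self-similar with a verifiable open set condition so that its Hausdorff measure can be pinned down exactly. Balancing the folding required for full projections against the monotonicity required for the antichain property, while keeping the pieces essentially disjoint and the measure tight, is where I expect the real difficulty to lie, and it is also the reason the matching upper bound cannot simply be inherited from the planar argument.
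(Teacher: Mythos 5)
The statement you are addressing is one of the paper's \emph{open conjectures}: the authors give no proof of it, only the remark (via the hypersurfaces $S_p$ as $p\to\infty$) that the value $n\sigma_{n-1}$ is approached asymptotically. So there is no proof in the paper to compare yours against, and your text is, accordingly, a programme rather than a proof. As a programme it identifies the right model (the devil's staircase, split into pieces each projecting fully onto one facet) and a genuine obstruction (no decreasing graph can have all $n$ projections of full measure when $n\ge 3$), but it closes neither of the two gaps that make this a conjecture. First, no construction is actually produced: the iterated function system is never written down, and the two properties it must have --- that the attractor is an antichain, and that it decomposes into $n$ essentially disjoint pieces $S_j$ with $\pi_j(S_j)$ of full measure $\sigma_{n-1}$ --- are precisely the content of the conjecture, not routine verifications. (A small point on the lower bound: you only need $\pi_j$ itself to be $1$-Lipschitz, so that $\mathcal H^{n-1}(S_j)\ge\mathcal H^{n-1}(\pi_j(S_j))$ by Lemma \ref{Lipschitz_lemma}; the Lipschitz inverse is irrelevant there. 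The real difficulty is the simultaneous fullness of the $n$ projections on disjoint pieces.)

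Second, and more seriously, the proposed route to the upper bound is not sound. For a self-similar set satisfying the open set condition, the contraction ratios determine the Hausdorff dimension and guarantee that the measure in that dimension is positive and finite, but they do not determine its value; computing the exact Hausdorff measure of even the classical self-similar sets (the Sierpi\'nski gasket, say) is a famously open problem, so ``read off the exact value from the contraction ratios'' is not an available step. Moreover the $n=2$ case does not actually fit your template: the devil's staircase graph is a rectifiable curve whose length the paper computes from Lebesgue differentiation and Lemma \ref{bogachev}, not a strictly self-similar attractor, and the matching upper bound there comes from inequality (\ref{eq:1}), whose $n\ge3$ analogue is exactly Conjecture \ref{projections}. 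Unless you either prove that projection inequality for your specific set, or exhibit an $S$ rectifiable enough that $\mathcal H^{n-1}(S)$ can be computed directly (e.g.\ by the area formula), the upper bound --- and hence the equality $\mathcal H^{n-1}(S)=n\sigma_{n-1}$ --- remains open, which is why the paper states this as a conjecture rather than a theorem.
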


\end{document}